\newtheorem{thm}{Theorem}[section]
\newtheorem{prop}[thm]{Proposition}
\newtheorem{lem}[thm]{Lemma}
\newtheorem{cor}[thm]{Corollary}
\theoremstyle{definition}
\newtheorem{rem}[thm]{Remark}
\newtheorem{exmp}[thm]{Example}
\def\bh{{\cal B}({\cal H})}
\def\bp{{\cal B}_p({\cal H})}
\begin{document}

\title{\vspace*{0cm}Norm Inequalities in Operator Ideals\footnote{2000 MSC. Primary 15A45;  Secondary 47A30, 47A63, 47B10.}}

\date{}
\author{Gabriel Larotonda\footnote{Partially supported by IAM-CONICET.}}

\maketitle

\abstract{\noindent In this paper we introduce a new technique for proving norm inequalities in operator ideals with an unitarily invariant norm. Among the well known inequalities which can be proved with this technique are the L\"owner-Heinz inequality, inequalities relating various operator means and the Corach-Porta-Recht inequality. We prove two general inequalities and from them we derive several inequalities by specialization, many of them new. We also show how some inequalities, known to be valid for matrices or bounded operators, can be extended with this technique to normed ideals in $C^*$-algebras, in particular to the noncommutative $L^p$-spaces of a semi-finite von Neumann algebra.
\footnotesize{\noindent }\footnote{{\bf Keywords and
phrases:} operator algebra, norm inequality, unitarily invariant norm,  operator mean}}

\setlength{\parindent}{0cm} 

\section{Introduction}

Let $\bh$ denote the set of bounded linear operators acting on a separable and complex Hilbert space ${\cal H}$. Among the basic properties of the usual supremum norm of $\bh$ stands its \textit{unitarily invariance}, namely
$$
\|UXV\|=\|X\|
$$
for $X\in \bh$ and $U,V$ unitary operators of $\bh$. Related to it is the elementary inequality
$$
\|Z\pm i X Z\|\ge \|Z\|,
$$
valid for self-adjoint $X\in\bh$ and any $Z\in \bh$, which can be restated using the left multiplication operator $L_X$,
$$
\|(1\pm i L_X) Z\|\ge \|Z\|.
$$
This property combined with the Weierstrass factorization theorem for entire functions, which allows us to use the inequality above repeatedly, turns out to be a quite powerful tool to prove nontrivial norm inequalities. This fact was pointed out in \cite{neeb} by Neeb for the particular function $f(z)=z^{-1}\sin(z)$, so we would like to stress that the main idea in Theorem \ref{in} below stems from Neeb's paper. With this approach we prove new inequalities, and we show how other inequalities, that through the years have been proved by a long list of authors with a variety of techniques, can be derived by specialization and further generalized to a broader setting. 

\medskip

This is not the place to give a thorough discussion on the subject of norm inequalities in spaces of operators, but let us just mention that it has deep connections with the theory of majorization and Schur products for matrices \cite{man}, and that recently Hiai and Kosaki \cite{kosakih}  developed a striking technique that involves Fourier transforms to prove norm inequalities for bounded operators.

\medskip

We have chosen to state our results in the setting of ideals of operators with an unitarily invariant norm in a $C^*$-algebra, where computations require a slightly more delicate approach due to the different spectra that may arise when changing the norm. In our discussion are then included the noncommutative $L^p$-spaces $L^p(M,\tau)$ of a semi-finite von Neumann algebra $M$ with trace $\tau$, which are the completion of the ideals
$$
{\cal I}^{\tau,p}=\{x\in M: \tau(\mid x\mid^p)<+\infty\}
$$
relative to the unitarily invariant norm given by $\|x\|_p=\tau(\mid x\mid^p)^{\frac1p}$, where as usual $p\in [1,+\infty]$ and $L^{\infty}(M,\tau)=M$.

\medskip

The main new results of this paper are Theorems \ref{in}, \ref{ineq} and \ref{comparo}, and the paper is organized as follows: In Section \ref{secmain} we recall some elementary notions on dissipative operators in order to prove an inequality involving  perturbations of the identity. Then these building blocks are used to prove our first main inequality related to entire functions of finite order (with purely imaginary roots) by means of Weierstrass' factorization theorem. We are mainly concerned with the left and right multiplication representations in a $C^*$-algebra $M$, but the technique generalizes to a broader setting (see Theorem \ref{in} and the concluding remarks of the paper). In Section \ref{seccpr} we prove a generalization of the Corach-Porta-Recht inequality and some related inequalities. In Section \ref{secmero} we prove our second main inequality concerning meromorphic functions with interlaced (purely imaginary) roots, and from there we derive several results by specialization. Among them are the exponential metric increasing property, the comparison among various means of operators, the Lipschitz continuity of the absolute value map and the C\"ordes (also known as the L\"owner-Heinz) inequality. Finally, section \ref{secfinal} contains some immediate generalizations of the main results of this paper to other contexts.

\medskip

A similar technique (involving the Weierstrass factorization theorem) has been used by Kosaki \cite{kosakinew}, to prove that certain ratios of real analytic (scalar) functions are positive definite (in fact, infinitely divisible, see Theorem 2 and Corollary 3 of \cite{kosakinew}), thus obtaining norm inequalities for the corresponding operator means. The approach in \cite{kosakinew} allows to detect failure of norm inequalities by computing Fourier transforms and showing that the corresponding scalar function is not positive definite. In our setting, on the other hand, it is possible to deal with inequalities with complex parameters since we do not require the relevant entire functions to be real valued on the real axis (see Remark \ref{compl1} and Example \ref{compl2}).

\section{First main inequality}\label{secmain}

Let $E$ be a (real or complex) Banach space. Let ${\cal B}(E)$ stand for the Banach algebra of linear bounded operators in $E$. Let $GL(E)$ stand for the group of invertible elements of ${\cal B}(E)$, which is a Banach-Lie group, open in ${\cal B}(E)$. Let ${\mathfrak ul}(E)$ be the group of linear isometries of $E$, that is
$$
{\mathfrak ul}(E)=\{g\in GL(E): \|g\|=\|g^{-1}\|\le 1\}.
$$
Here $\|\cdot\|$ stands for the usual supremum norm. Then ${\mathfrak ul}(E)$ is a real Banach-Lie group with a topology which is possibly finer than the norm topology of $GL(E)$, and 
$$
{\mathfrak u}=\{T\in {\cal B}(E):\|e^{sT}\|\le 1 : \,\forall s\in \mathbb R\}
$$
is the Banach-Lie algebra of ${\mathfrak ul}(E)$. Recall that an operator $A\in {\cal B}(E)$ is \textit{expansive} if
$$
\|Az\|_E\ge \|z\|_E
$$
for any $z\in E$. For $z\in E$, let $D(z)\subset E^*$ stand for the set of norming functionals of $z$,
$$
D(z)=\{\phi\in E^*:\|\phi\|=1\;\mbox{ and }\; \phi(z)=\|z\|_E\}.
$$
An operator $T\in {\cal B}(E)$ is \textit{dissipative} if, for any $z\in E$,
$$
Re\,\phi(Tz)\le 0 \quad\mbox{ for any }\; \phi\in D(z).
$$
Our main reference on the subject of dissipative operators (sometimes called \textit{accretive} in the literature) is the book by Hille and Phillips \cite{hplibro}. The proof of the following theorem can be found in \cite[Theorem II.2]{neeb}.

\begin{thm}\label{disi}
If $T\in {\cal B}(E)$, the following assertions are equivalent.
\begin{enumerate}
\item $T$ is dissipative.
\item For each $z\in E$, $Re\,\phi(Tz)\le 0$ for some $\phi\in D(z)$.
\item $\|e^{sT}\|\le 1$ for any $s\ge 0$.
\item $1-sT$ is expansive and invertible for any $s\ge 0$
\end{enumerate}
\end{thm}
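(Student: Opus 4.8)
The plan is to prove the cyclic chain $(1)\Rightarrow(2)\Rightarrow(4)\Rightarrow(3)\Rightarrow(1)$, which is the classical Lumer--Phillips circle of implications specialized to a bounded generator $T$. The implication $(1)\Rightarrow(2)$ is immediate once one recalls that $D(z)$ is nonempty for every $z\in E$ (Hahn--Banach), so a condition holding for all $\phi\in D(z)$ in particular holds for some such $\phi$.

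For $(2)\Rightarrow(4)$ I would fix $s\ge 0$ and $z\in E$ and choose $\phi\in D(z)$ with $Re\,\phi(Tz)\le 0$; then
$$
\|(1-sT)z\|_E\ge Re\,\phi\big((1-sT)z\big)=\|z\|_E-s\,Re\,\phi(Tz)\ge \|z\|_E,
$$
so $1-sT$ is expansive, hence injective with closed range, and whenever it is invertible it satisfies $\|(1-sT)^{-1}\|\le 1$. Invertibility for every $s\ge 0$ I would then get by a connectedness argument: the set $S=\{s\ge 0:\ 1-sT\in GL(E)\}$ contains a neighbourhood of $0$ by the Neumann series, is open because $GL(E)$ is open, and is relatively closed in $[0,\infty)$ because on $S$ the inverses are uniformly bounded by $1$ — if $s_n\in S$ with $s_n\to s_0$, the identity $1-s_0T=(1-s_nT)\big(1-(s_0-s_n)(1-s_nT)^{-1}T\big)$ exhibits $1-s_0T$ as a product of two invertibles as soon as $|s_0-s_n|\,\|T\|<1$. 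Thus $S=[0,\infty)$.

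For $(4)\Rightarrow(3)$ I would combine the uniform bound $\|(1-sT)^{-1}\|\le 1$, $s\ge 0$, with the exponential formula $e^{tT}=\lim_{n\to\infty}\big(1-\tfrac tn T\big)^{-n}$, valid for bounded $T$ (for $n$ large enough that $\tfrac tn\|T\|<1$, expand $-n\log(1-\tfrac tn T)=tT+O(1/n)$): since $\big\|\big(1-\tfrac tn T\big)^{-n}\big\|\le\big\|\big(1-\tfrac tn T\big)^{-1}\big\|^{\,n}\le 1$, passing to the limit yields $\|e^{tT}\|\le 1$ for all $t\ge 0$. Finally, for $(3)\Rightarrow(1)$, given $z\in E$ and $\phi\in D(z)$ consider $g(s)=Re\,\phi(e^{sT}z)$ on $[0,\infty)$: since $g(0)=\|z\|_E$ while $g(s)\le\|\phi\|\,\|e^{sT}\|\,\|z\|_E\le\|z\|_E$ for $s\ge 0$, the function $g$ has a one-sided maximum at $s=0$, so $g'(0)=Re\,\phi(Tz)\le 0$, and arbitrariness of $\phi\in D(z)$ gives dissipativity.

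I expect the genuine obstacle to be the invertibility half of $(2)\Rightarrow(4)$: the expansiveness estimate only yields injectivity and the norm bound on the inverse, not surjectivity, so one really must run the open-and-closed argument on the half-line $[0,\infty)$ (equivalently, invoke a Neumann-series-plus-continuity argument). Everything else — the two Hahn--Banach/norming-functional computations and the product-formula limit — is routine.
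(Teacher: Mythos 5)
Your proof is correct and complete. Note that the paper itself does not prove this theorem at all: it simply cites Neeb's paper (Theorem II.2 of the reference \cite{neeb}), so there is no in-text argument to compare against. Your cyclic chain $(1)\Rightarrow(2)\Rightarrow(4)\Rightarrow(3)\Rightarrow(1)$ is the standard Lumer--Phillips circle for a bounded generator, and each step checks out: the norming-functional estimate gives expansiveness of $1-sT$; the open-and-closed argument on $[0,\infty)$ (using the uniform bound $\|(1-sT)^{-1}\|\le 1$ on the set where invertibility holds, together with the factorization $1-s_0T=(1-s_nT)\bigl(1-(s_0-s_n)(1-s_nT)^{-1}T\bigr)$) correctly settles the only nontrivial point, surjectivity for all $s\ge 0$; the product formula $e^{tT}=\lim_n(1-\tfrac{t}{n}T)^{-n}$ is legitimate for bounded $T$ and transfers the contraction bound to the semigroup; and the one-sided maximum of $s\mapsto Re\,\phi(e^{sT}z)$ at $s=0$ recovers dissipativity for every $\phi\in D(z)$. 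You correctly identified the invertibility half of $(2)\Rightarrow(4)$ as the genuine content of the theorem.
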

In particular if $T$ is dissipative, $1-sT$ is invertible and the inverse is a bounded contraction. We prove here two lemmas which will be useful later.

\begin{lem}
If $A\in {\cal B}(E)$ is a contraction (i.e. $\|A\|\le 1$) then $A-1$ is a dissipative operator. In particular
$$
(1-s(A-1))^{-1}=(1+s-sA)^{-1}
$$
is also a contraction for any $s\ge 0$.
\end{lem}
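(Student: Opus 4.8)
The plan is to verify the first assertion directly from the definition of dissipativity and then read off the ``in particular'' statement from Theorem \ref{disi}. Fix $z\in E$ and a norming functional $\phi\in D(z)$; I want to show $Re\,\phi((A-1)z)\le 0$. Since $\phi(z)=\|z\|_E$ is a (nonnegative) real number by the definition of $D(z)$, linearity of $\phi$ gives
$$
Re\,\phi((A-1)z)=Re\,\phi(Az)-Re\,\phi(z)=Re\,\phi(Az)-\|z\|_E .
$$
It remains to bound the first term, and here the point is to pass through the absolute value before using submultiplicativity of the operator norm: since $\|\phi\|=1$ and $\|A\|\le 1$,
$$
Re\,\phi(Az)\le |\phi(Az)|\le \|\phi\|\,\|Az\|_E\le \|A\|\,\|z\|_E\le \|z\|_E .
$$
Substituting this into the previous identity yields $Re\,\phi((A-1)z)\le 0$. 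As $z\in E$ and $\phi\in D(z)$ were arbitrary, $A-1$ is dissipative.

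For the second claim I would simply invoke Theorem \ref{disi} with $T=A-1$: assertion~4 (and the remark immediately after the theorem) gives that $1-sT$ is expansive and invertible, with $(1-sT)^{-1}$ a bounded contraction, for every $s\ge 0$. Finally I rewrite $1-sT=1-s(A-1)=1+s-sA$, so that $(1+s-sA)^{-1}=(1-s(A-1))^{-1}$ is a contraction, as asserted.

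I do not anticipate any genuine obstacle: the only subtlety is that the statement is for an arbitrary real or complex Banach space $E$, so one must use the estimate $Re\,\phi(Az)\le|\phi(Az)|$ rather than attempting to control $\phi(Az)$ directly, and one must remember that $\phi(z)=\|z\|_E$ is real precisely by the way $D(z)$ is defined. Everything else is the triangle inequality together with the fact that $\|\phi\|=1$ and $\|A\|\le 1$.
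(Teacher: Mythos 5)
Your proof is correct and follows exactly the same chain of estimates as the paper: $Re\,\phi((A-1)z)=Re\,\phi(Az)-\|z\|_E\le|\phi(Az)|-\|z\|_E\le\|Az\|_E-\|z\|_E\le 0$, with the ``in particular'' clause read off from Theorem \ref{disi} and the remark following it. No issues.
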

\begin{proof}
Let $z\in E$, $\phi\in D(z)$. Then
$$
Re\,\phi((A-1)z)=Re\,\phi(Az)-\phi(z)\le \mid \phi(Az)\mid -\|z\|_E\le \|Az\|_E-\|z\|_E\le 0.
$$
\end{proof}

\begin{lem}\label{divido}
Let $a,b\in \mathbb R$, $a/b>1$. Let $A\in {\cal B}(E)$ be a contraction. Then
$$
T= \frac{a}{b}+\left(1-\frac{a}{b}\right)A
$$
is invertible and the inverse is a contraction.
\end{lem}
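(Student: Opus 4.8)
The plan is to reduce the statement to the preceding lemma by a single affine reparametrization. Writing $t=a/b$, the hypothesis $a/b>1$ says precisely that $t$ is a well-defined real number with $t>1$, so that $s:=t-1>0$. The point is then to recognize $T$ as a perturbation of the identity of exactly the shape already handled.

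Concretely, I would compute
$$
T=\frac{a}{b}+\left(1-\frac{a}{b}\right)A=t+(1-t)A=(1+s)-sA=1-s(A-1).
$$
Since $\|A\|\le 1$, the preceding lemma shows that $A-1$ is dissipative; and since $s\ge 0$, Theorem \ref{disi} (equivalence of (1) and (4)), applied to the dissipative operator $A-1$, yields that $1-s(A-1)=T$ is invertible and expansive, whence $T^{-1}$ is a bounded contraction (as recorded in the remark following Theorem \ref{disi}). Equivalently, one may simply quote the conclusion of the preceding lemma verbatim: $(1+s-sA)^{-1}$ is a contraction for every $s\ge 0$, and here $s=a/b-1$.

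There is essentially no hard step here: the only thing to watch is that the substitution $s=a/b-1$ must land in the admissible range $s\ge 0$, which is exactly the content of the hypothesis $a/b>1$ (if one only assumed, say, $a/b\neq 0$, the resulting $s$ could be negative and the argument would break). So the main obstacle is merely spotting that $T$ has the form $1-s(A-1)$ with $s>0$; once that identity is written down, the invertibility of $T$ and the contraction bound on $T^{-1}$ are immediate from what has already been proved.
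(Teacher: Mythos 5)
Your proof is correct and is exactly the paper's argument: set $s=a/b-1>0$, rewrite $T=1-s(A-1)$, and invoke the preceding lemma (via Theorem \ref{disi}) to conclude that $T$ is invertible with contractive inverse. You have merely written out the algebraic identification that the paper leaves implicit.
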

\begin{proof}
Put $s=\frac{a}{b}-1>0$. Then the result follows from the previous lemma.
\end{proof}

When $E$  is complex, the set $Herm(E)=i{\mathfrak u}$ is the set of \textit{Hermitian} elements of ${\cal B}(E)$. If $T\in Herm(E)$, then its norm can be computed using the spectral radius formula \cite[Chapter 4]{upmeier}
$$
\|T\|=\sup\{ |\lambda|: \lambda\in\sigma(T)\}.
$$
Note that when $E$ is a complex Banach space, 
$$
Herm(E)=Diss(E)\cap (-Diss(E)),
$$
where $Diss(E)$ denotes the cone of dissipative operators.

Let $A$ be a complex involutive algebra, and $A_h$  stand for the set of self-adjoint elements of $A$,
$$
A_h=\{x\in A:x^*=x\}.
$$
Assume that $A$ acts on a (real or complex) Banach space $(E,\|\cdot\|_E)$, so that $\pi(a)\in {\cal B}(E)$ for any $a\in A$. Assume further that $E$ has a unitarily invariant norm in the sense that 
\begin{equation}\label{uni}
\|e^{\pi(i\,x)}z\|_E=\|z\|_E 
\end{equation}
for any $z\in E$ and any $x\in A_h$. Then if $s\in \mathbb R$, replacing $x$ by $s\,x$ in  (\ref{uni}) yields
$$
\|e^{s\,\pi(i\,x)}z\|_E=\|z\|_E
$$
for any $x\in A_h$ and any $z\in E$, so $\pi(x)\in {\cal B}(E)$ is Hermitian. Then
$$
\|z+ s\, \pi(ix) z\|_E\ge \|z\|_E
$$
for any $z\in E$ and any $s\in\mathbb R$. 

\begin{rem}\label{entera}
Let $F:\mathbb C\to \mathbb C$ be an entire function and 
$$
M(r)=\max\limits_{|z|\le r}|F(z)|=\max\limits_{|z|= r}|F(z)|.
$$
The \textit{order of growth} $\rho$ of an entire function is given by
$$
\rho=\limsup\limits_{r\to +\infty}\frac{\ln\ln M(r)}{\ln r}.
$$
\end{rem}
It is easy to check that if $F,G$ are entire functions of order $\rho_F,\rho_G$ respectively then
$$
\rho_{F+G},\rho_{FG}\le \max\{\rho_F,\rho_F\}.
$$
Our main reference on entire functions is the monograph \cite{levin} by B. Levin. Any function $F$ of finite order $\rho\le 1$ can be written in its Weierstrass expansion
$$
F(z)=z^{j} e^{\alpha z}\lambda \prod\limits_{k\in \mathbb Z}\left(1-\frac{z}{z_k} \right)e^{\frac{z}{z_k}}.
$$
Here $\{z_k\}\subset \mathbb C-\{0\}$ are the nonzero roots of $F$ and $\lambda=F(z)/z^j\left|_{z=0}\right.\in\mathbb C$, where $j\in \mathbb N$ is the order of zero as a root of $F$. The product converges uniformly to $F$ on compact sets of $\mathbb C$. Note that the exponent $\alpha\in\mathbb C$ can be computed via
$$
\alpha= \frac{d}{dz}\ln(F(z)/z^j)\left|_{z=0}\right. .
$$

\begin{thm}\label{in}
Let $A$ be an involutive complex algebra which acts on a complex Banach space $E$. Assume that the norm of $E$ is unitarily invariant,
$$
\|e^{\pi(ix)}z\|_E=\|z\|_E
$$
for any $z\in E$ and any $x\in A_h$. Let $F:\mathbb C\to \mathbb C$ be an entire function of order $\rho\le 1$, such that the nonzero roots $\{w_k=iz_k\}\subset \mathbb C-\{0\}$ of $F$ are purely imaginary, namely
$$
F(z)=\lambda\, e^{\alpha z} z^n\prod \left(1-\frac{iz}{z_k} \right) e^{\frac{iz}{z_k}}
$$
is the Weierstrass expansion of $F$, with $\lambda=F(z)/z^n\left|_{z=0}\right.\in\mathbb C$ and $z_k\in \mathbb R-\{0\}$. Then
\begin{enumerate}
\item If $x\in A_h$ and $z\in E$ then
$$
\|F(\pi(x))z\|_E\ge\, \mid \lambda\mid \|\; e^{\alpha \pi(x)} \pi(x)^n(z)\|_E.
$$
\item If $\sigma_{{\cal B}(E)}(\pi(x))\cap \{w_k\}=\emptyset$, then $F(\pi(x))$ is invertible in ${\cal B}(E)$.
\item In particular, if $\mid F(0)\mid=1$ and $\alpha$ is purely imaginary, then
$$
\|F(\pi(x))z\|_E\ge \|z\|_E.
$$
\end{enumerate}
\end{thm}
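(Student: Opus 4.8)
The plan is to run the Weierstrass product through the multiplication representation and apply the perturbation estimate $\|z+s\,\pi(ix)z\|_E\ge\|z\|_E$ factor by factor. Since $F$ has order $\rho\le 1$ with purely imaginary nonzero roots $w_k=iz_k$, $z_k\in\mathbb R\setminus\{0\}$, the partial products
$$
F_N(z)=\lambda\,e^{\alpha z}z^n\prod_{|k|\le N}\Bigl(1-\frac{iz}{z_k}\Bigr)e^{iz/z_k}
$$
converge to $F$ uniformly on compact subsets of $\mathbb C$. The key observation is that for $x\in A_h$ the spectrum $\sigma(\pi(x))$ is a \emph{compact} subset of $\mathbb R$ (because $\pi(x)$ is Hermitian in ${\cal B}(E)$, so $\|\pi(x)\|$ equals its spectral radius and $\sigma(\pi(x))\subset\mathbb R$). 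By the holomorphic functional calculus in the Banach algebra ${\cal B}(E)$, $F_N(\pi(x))\to F(\pi(x))$ in operator norm, hence $F_N(\pi(x))z\to F(\pi(x))z$ in $E$ for every $z$. So it suffices to prove the inequality for each $F_N$ and pass to the limit.

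For a single factor $\bigl(1-\frac{iz}{z_k}\bigr)$ evaluated at $\pi(x)$ we get $1-\frac{i}{z_k}\pi(x)=1+\frac{1}{z_k}\pi(ix)$ (using that $\pi$ is a representation of the involutive algebra, $\pi(ix)=i\pi(x)$ up to the appropriate reading, and more to the point $-i\pi(x)=\pi(-ix)$ with $-x^*=-x$, i.e. $\pi(ix)$ is Hermitian). Thus with $s=1/z_k\in\mathbb R$ the unitary-invariance inequality gives
$$
\Bigl\|\bigl(1+\tfrac1{z_k}\pi(ix)\bigr)w\Bigr\|_E\ge\|w\|_E
$$
for every $w\in E$. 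The exponential correction factors $e^{iz/z_k}$ and the leading exponential $e^{\alpha z}$ are handled by noting that for $x\in A_h$ the operators $\pi(ix)$ are Hermitian, so $e^{t\pi(ix)}$ is a surjective isometry of $E$ for real $t$; in particular each $e^{i\pi(x)/z_k}$ is an isometry, and applying an isometry does not change the $E$-norm. Peeling these factors off one at a time (isometries drop out, each Weierstrass factor only increases the norm), and keeping only $|\lambda|$ from the scalar $\lambda$, yields
$$
\|F_N(\pi(x))z\|_E\ge|\lambda|\,\bigl\|e^{\alpha\pi(x)}\pi(x)^n z\bigr\|_E,
$$
where the $e^{\alpha\pi(x)}$ on the right is whatever survives of the leading exponential (note $e^{\alpha\pi(x)}$ need not be an isometry unless $\alpha$ is purely imaginary). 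Letting $N\to\infty$ proves item~1. For item~2: if $\sigma_{{\cal B}(E)}(\pi(x))\cap\{w_k\}=\emptyset$ then $F$ does not vanish on $\sigma(\pi(x))$, since the only other zero of $F$ is at $0$ and $0\notin\sigma(\pi(x))$ would be forced — more carefully, $0\in\sigma(\pi(x))$ only matters if $n\ge 1$, and then we should assume $0\notin\{w_k\}$ is replaced by the hypothesis as stated; in any case $1/F$ is holomorphic on a neighborhood of $\sigma(\pi(x))$, so $F(\pi(x))$ is invertible by the functional calculus (its inverse being $(1/F)(\pi(x))$). Item~3 is the specialization $n=0$, $|\lambda|=|F(0)|=1$, $\alpha\in i\mathbb R$: then $e^{\alpha\pi(x)}$ is an isometry and the right-hand side of item~1 collapses to $\|z\|_E$.

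The main obstacle I anticipate is the interchange of the infinite Weierstrass product with the functional calculus and with the norm — i.e. justifying $\|F_N(\pi(x))z\|_E\to\|F(\pi(x))z\|_E$ rigorously. This is where the order hypothesis $\rho\le 1$ and the genus-one form of the product matter: one needs uniform convergence of $F_N\to F$ on a fixed compact neighborhood of the compact set $\sigma(\pi(x))\subset\mathbb R$, together with the continuity of the holomorphic functional calculus $g\mapsto g(\pi(x))$ from $(\mathcal O(U),\text{unif.})$ to $({\cal B}(E),\|\cdot\|)$. A secondary subtlety is bookkeeping the non-isometric leading factor $e^{\alpha\pi(x)}$: it must be carried through untouched on the right-hand side rather than absorbed, and only in item~3 does the hypothesis $\alpha\in i\mathbb R$ let it drop. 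Everything else — each single-factor estimate, the isometry of $e^{i\pi(x)/z_k}$, the spectral radius formula for Hermitian operators — is immediate from Theorem~\ref{disi}, the displayed inequality preceding Remark~\ref{entera}, and the cited spectral radius formula.
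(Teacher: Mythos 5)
Your proof is correct and follows essentially the same route as the paper: expand $F(\pi(x))$ as $\lambda\,e^{\alpha\pi(x)}\pi(x)^n$ times the limit of the partial Weierstrass products, note that each factor $1-\frac{1}{z_k}\pi(ix)$ is expansive (via the unitary invariance and Theorem \ref{disi}) while each $e^{\pi(ix)/z_k}$ is an isometry, and pass to the limit through the analytic functional calculus. The paper's own proof is in fact terser; your extra care with the uniform convergence of $F_N(\pi(x))\to F(\pi(x))$ and with the role of the zero at the origin in item 2 only makes the argument more complete.
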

\begin{proof}
Clearly $F(\pi(x))$ is well defined by the analytic functional calculus,  and then the conclusion follows from the remarks above since
$$
F(\pi(x))=\lambda\, e^{\alpha \pi(x)}\pi(x)^n \lim_N \prod_{k=1}^N \left[1-\frac{1}{z_k}\pi(ix)\right]e^{\frac{\pi(ix)}{z_k}}.
$$
If the spectrum of $\pi(x)$ does not intersect the roots of $F$, then clearly $$
0\notin \sigma_{{\cal B}(E)}F(\pi(x)).
$$
\end{proof}

\begin{rem}
The typical setting in operator algebras occurs when either $A=E=\bh$, where ${\cal H}$ is a complex Hilbert space and $\pi=ad$, the adjoint representation, or  else when $E$ is one of the $p$-Schatten \cite{simon} ideals of compact operators in $\bh$. We wish to extend this result to normed ideals of any $C^*$-algebra, the obvious examples being the noncomutative $L^p$  spaces of Murray-von Neumann and Segal \cite{segal,nelson} arising from a semi-finite trace in a semi-finite factor.
\end{rem}

\medskip

\begin{rem}\label{espectro}
Let us fix the notation for the next theorem. Let $M$ be a $C^*$-algebra and $\sigma(X)$ stand for the spectrum of $X$ relative to $M$. Let ${\cal I}_0$ be a normed ideal in $M$ with norm $\|\cdot\|_{\cal I}$ such that
\begin{equation}\label{ideal}
\|XYZ\|_{\cal I}\le \|X\|\; \|Y\|_{\cal I}\, \|Z\|
\end{equation}
for $X,Z\in M$ and $Y\in {\cal I}_0$. Note that $\|UXV\|_{\cal I}=\|X\|_{\cal I}$ for unitary $U,V\in M$ and  $X\in {\cal I}_0$. Let ${\cal I}$ stand for the completion of the linear space ${\cal I}_0$ relative to the norm $\|\cdot\|_{\cal I}$. We say that ${\cal I}$ is a \textit{normed ideal in $M$ with an unitarily invariant norm}.

Let $L$ and $R$ stand for the left and right multiplication representations, that is $L_X(T)=XT$, $R_X(T)=TX$. Then (since $L_X$ and $R_Y$ commute for $X,Y\in M$)
$$
e^{L_X+R_Y}T=e^X T e^{Y}.
$$
By inequality (\ref{ideal}), the maps $L_X$ and $R_Y$ extend to bounded linear operators
$$
\widetilde{L_X},\widetilde{R_X}\in {\cal B}({\cal I}).
$$
Note that 
\begin{equation}\label{ad}
e^{\widetilde{L_X}+\widetilde{R_Y}}=\widetilde{L_{e^X}}\widetilde{R_{e^{Y}}}.
\end{equation}
Let $L(M)$ stand for the left multiplication representation of $M$, which is a closed subalgebra of ${\cal B}(M)$. Then $M$ is isomorphic to $L(M)$, and $\sigma_{L(M)}(L_X)=\sigma(X)$. Since $L_X\mapsto \widetilde{L_X}$ is an injective homomorphism of complex unital Banach algebras which gives the inclusion $L(M)\hookrightarrow {\cal B}({\cal I})$, then $\sigma_{B({\cal I})}(\widetilde{L_X})\subset \sigma_{L(M)}(L_X)=\sigma(X)$. The same remark holds for $R(M)$, hence
$$
\sigma_{{\cal B}(\cal I)}(\widetilde{L_X}+\widetilde{R_Y})\subset \sigma_{{\cal B}(\cal I)}(\widetilde{L_X})+\sigma_{{\cal B}(\cal I)}(\widetilde{R_X})\subset \sigma(X)+\sigma(Y).
$$
In particular, if $X,Y\in M_h$, then 
$$
\sigma_{{\cal B}(\cal I)}(\widetilde{L_X}+\widetilde{R_Y})\subset \mathbb R.
$$
\end{rem}
We shall omit the tilde from now on, and write $e^XT$ instead of $\widetilde{L_{e^X}}T$, etc.

\begin{lem}
If $X,Y\in M_h$ and  ${\cal I}$  is a complex normed ideal in $M$ with a unitarily invariant norm, then
\begin{enumerate}
\item $\sigma_{{\cal B}(\cal I)}(  {L_X}+{R_Y}    )\subset \mathbb R$
\item $\|e^{i\, (L_X+R_Y)}T\|_{\cal I}=\|T\|_{\cal I}$ for any $T\in {\cal I}$
\item $1 \pm \frac{i}{r}(L_X+R_Y)$ is expansive and invertible for each $r\ne 0$.
\end{enumerate}
\end{lem}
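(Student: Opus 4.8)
The plan is to handle the three items in sequence, each feeding the next, using only the material already developed in this section.

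For item~1 I would simply invoke the computation already carried out in Remark~\ref{espectro}: the maps $L_X\mapsto\widetilde{L_X}$ and $R_Y\mapsto\widetilde{R_Y}$ are injective unital homomorphisms of complex Banach algebras from $L(M)$ and $R(M)$ into ${\cal B}({\cal I})$, so passing to ${\cal B}({\cal I})$ can only shrink spectra; since $L_X$ and $R_Y$ commute, $\sigma_{{\cal B}({\cal I})}(L_X+R_Y)\subset\sigma(X)+\sigma(Y)$, and this set is contained in $\mathbb R$ because $X,Y\in M_h$.

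For item~2, by the identity (\ref{ad}) we have $e^{i(L_X+R_Y)}T=e^{iX}\,T\,e^{iY}$ for $T\in{\cal I}_0$, and since $X,Y\in M_h$ the elements $e^{iX},e^{iY}$ are unitaries of $M$; the unitarily invariant property recorded just after (\ref{ideal}) then gives $\|e^{iX}Te^{iY}\|_{\cal I}=\|T\|_{\cal I}$ on ${\cal I}_0$. Both sides are $\|\cdot\|_{\cal I}$-continuous in $T$ and ${\cal I}_0$ is dense in ${\cal I}$, so the equality extends to every $T\in{\cal I}$. Replacing $(X,Y)$ by $(sX,sY)$ with $s\in\mathbb R$ gives $\|e^{is(L_X+R_Y)}T\|_{\cal I}=\|T\|_{\cal I}$ for all real $s$; in particular $\|e^{-is(L_X+R_Y)}\|\le 1$ for all $s$, so $-i(L_X+R_Y)\in{\mathfrak u}$ and hence $L_X+R_Y\in Herm({\cal I})$, exactly as in the discussion preceding Theorem~\ref{in}.

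For item~3 I would use that $Herm({\cal I})=Diss({\cal I})\cap(-Diss({\cal I}))$, so both $i(L_X+R_Y)$ and $-i(L_X+R_Y)$ are dissipative. By the equivalence of conditions~1 and~4 in Theorem~\ref{disi}, $1-s\,\big({\pm}\,i(L_X+R_Y)\big)$ is expansive and invertible in ${\cal B}({\cal I})$ for every $s\ge 0$; choosing $s=1/|r|$ and running through the sign combinations produces precisely $1\pm\frac{i}{r}(L_X+R_Y)$ for any $r\ne 0$. None of the steps is deep: the lemma is essentially bookkeeping once Theorem~\ref{disi} and Remark~\ref{espectro} are available. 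The one point that deserves care is the density argument in item~2 — the operator identity $e^{is(L_X+R_Y)}=\widetilde{L_{e^{isX}}}\,\widetilde{R_{e^{isY}}}$ is transparent on ${\cal I}_0$ but must be transported to the completion ${\cal I}$; this is legitimate because inequality (\ref{ideal}) already guarantees that both $e^{is(L_X+R_Y)}$ and $\widetilde{L_{e^{isX}}}\widetilde{R_{e^{isY}}}$ are bounded operators on all of ${\cal I}$ agreeing on the dense subspace ${\cal I}_0$.
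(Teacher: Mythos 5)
Your proposal is correct and follows the same route as the paper's (very terse) proof: item 1 from the spectral inclusion in Remark \ref{espectro}, item 2 from the identity (\ref{ad}) together with unitary invariance of the norm, and item 3 from Theorem \ref{disi} applied to the dissipative operators $\pm i(L_X+R_Y)$. The density argument you supply for extending the identity from ${\cal I}_0$ to ${\cal I}$ is a detail the paper leaves implicit, not a different approach.
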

\begin{proof}
The first assertion is the above remark. The second assertion combines (\ref{ad}) and the unitarily invariance of the norm. The third assertion follows from Theorem \ref{disi}.
\end{proof}

\begin{thm}\label{ineq}
Let $M$ be a $C^*$ algebra and  $({\cal I},\|\cdot\|_{\cal I})$ a complex normed ideal in $M$ with an unitarily invariant norm. Let $F:\mathbb C\to \mathbb C$ be an entire function of order $\rho\le 1$ such that the nonzero roots of $F$ are purely imaginary, namely
$$
F(z)=\lambda\, e^{\alpha z} z^n\prod \left(1-\frac{iz}{z_k} \right) e^{\frac{iz}{z_k}}
$$
is the Weierstrass factorization of $F$, with $\lambda=F(z)/z^n\left|_{z=0}\right.\in\mathbb C$ and $z_k\in\mathbb R-\{0\}$. Then
\begin{enumerate}
\item If $X,Y\in M_h$ and $T\in {\cal I}$, 
$$
\|F(L_X+R_Y)T\|_{\cal I}\ge\, \mid \lambda\mid \left\| \sum_{k=1}^n  
\left(\begin{array}{c}
	n \\ k
\end{array}\right)
X^k e^{b X} T e^{bY } Y^{n-k}\right\|_{\cal I},
$$
with $b=Re(\alpha)$.
\item If $F(0)\ne 0$, then $F_{X,Y}=F(L_X+R_Y)$ is invertible in ${\cal B}({\cal I})$, with 
$$
\|F_{X,Y}^{-1}\|_{\cal B({\cal I})}\le |\lambda|^{-1} \|e^{-b X}\| \, \|e^{-b Y}\|.
$$
\item In particular, if $\mid F(0)\mid=1$ and $\alpha$ is purely imaginary, then
$$
\|F_{X,Y}T\|_{\cal I}\ge \|T\|_{\cal I}
$$
and the inverse of $F_{X,Y}$ is a contraction.
\item If ${\cal I}$ is a real ideal, the same assertions hold if we require further that $F(\mathbb R)\subset\mathbb R$.
\end{enumerate}
\end{thm}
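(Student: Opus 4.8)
The plan is to run, for the representation $(X,Y)\mapsto L_X+R_Y$, essentially the proof of Theorem \ref{in}, and then to read off the explicit constants from the identity $e^{b(L_X+R_Y)}T=e^{bX}Te^{bY}$. Write $A=L_X+R_Y\in{\cal B}({\cal I})$, $\alpha=b+ic$ with $b=Re(\alpha)$, and let $G_N(z)=\prod_{k=1}^{N}\bigl(1-\frac{iz}{z_k}\bigr)e^{iz/z_k}$, with $G=\lim_N G_N$ the canonical product over the nonzero roots, so that $F(z)=\lambda\,e^{\alpha z}z^{n}G(z)$. The Lemma just above supplies everything needed: $\sigma_{{\cal B}({\cal I})}(A)\subset\mathbb R$ is compact, so $F(A)$ is defined by the holomorphic functional calculus and, the Weierstrass product converging uniformly near $\sigma(A)$, one has $F(A)=\lambda\,e^{\alpha A}A^{n}G(A)$ with $G(A)=\lim_N G_N(A)$ in ${\cal B}({\cal I})$; moreover $e^{itA}$ is an isometry of ${\cal I}$ for every real $t$, and each factor $1-\frac{i}{z_k}A$ is expansive and invertible with contractive inverse.

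For assertion (1) I would first observe that each partial product $G_N(A)$ is a composition of expansive operators $1-\frac{i}{z_k}A$ and isometries $e^{iA/z_k}$, hence is itself expansive; passing to the limit in $\|G_N(A)w\|_{\cal I}\ge\|w\|_{\cal I}$ shows $G(A)$ is expansive. All operators in sight are functions of $A$ and hence commute, so $F(A)T=\lambda\,e^{icA}\,G(A)\,\bigl(e^{bA}A^{n}T\bigr)$; as $e^{icA}$ is isometric and $G(A)$ expansive, $\|F(A)T\|_{\cal I}\ge|\lambda|\,\|e^{bA}A^{n}T\|_{\cal I}$. Then $e^{bA}A^{n}T=e^{bX}\bigl((L_X+R_Y)^{n}T\bigr)e^{bY}=\sum_{k=0}^{n}{n\choose k}X^{k}e^{bX}Te^{bY}Y^{n-k}$ by the binomial theorem (valid since $L_X$ and $R_Y$ commute, and $X$ commutes with $e^{bX}$), which is the right-hand side of (1).

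For (2) and (3): if $F(0)\ne0$ then $n=0$ and $\lambda\ne0$, and the only roots of $F$ are the purely imaginary $iz_k$, which avoid $\sigma(A)\subset\mathbb R$; by the spectral mapping theorem $0\notin\sigma(F(A))$, so $F_{X,Y}=\lambda\,e^{\alpha A}G(A)$ is invertible and $G(A)=\lambda^{-1}e^{-\alpha A}F_{X,Y}$ is invertible with $\|G(A)^{-1}\|\le1$ (an invertible expansive operator has contractive inverse). Since $\|e^{-\alpha A}\|=\|e^{-bA}e^{-icA}\|\le\|e^{-bA}\|\le\|e^{-bX}\|\,\|e^{-bY}\|$ by the ideal inequality (\ref{ideal}), $F_{X,Y}^{-1}=\lambda^{-1}G(A)^{-1}e^{-\alpha A}$ obeys the bound in (2). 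Assertion (3) is then the special case $|\lambda|=1$, $b=0$ of (1) and of the bound in (2).

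The step I expect to be the main obstacle is (4), the real case, where one cannot multiply by $i$ to produce the building blocks above. Here $F(\mathbb R)\subset\mathbb R$ forces $\lambda,\alpha\in\mathbb R$ and $\overline{F(\bar z)}=F(z)$, so the nonzero roots occur in conjugate pairs $\{iz_k,-iz_k\}$; grouping them rewrites the canonical product as a convergent product of the real factors $1+z_k^{-2}z^{2}$, and it is precisely this that makes $F(A)$ a well-defined operator of the real space ${\cal I}$. It then suffices to know that each $1+z_k^{-2}A^{2}$ is expansive and invertible on ${\cal I}$, i.e., by Theorem \ref{disi}, that $-A^{2}$ is dissipative there; I would get this from the Gaussian average $e^{-vA^{2}}=(4\pi v)^{-1/2}\int_{\mathbb R}e^{-t^{2}/4v}e^{itA}\,dt$ together with $\|e^{itA}\|\le1$ (all quantities being real). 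With these real building blocks the estimates of (1)--(3) carry over verbatim. Alternatively one complexifies ${\cal I}$: the hypothesis $F(\mathbb R)\subset\mathbb R$ makes $F(A)$ commute with the conjugation on ${\cal I}_{\mathbb C}$, hence preserve ${\cal I}$, and one invokes (1)--(3) for ${\cal I}_{\mathbb C}$. Apart from this, the only points needing attention --- convergence of the operator product and persistence of expansiveness under the limit --- are routine.
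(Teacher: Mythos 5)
Your argument is correct and is essentially the paper's own proof of Theorem \ref{ineq}, only written out in much more detail: the same Weierstrass factorization into isometric factors $e^{i(L_X+R_Y)/z_k}$ and expansive factors $1-\frac{i}{z_k}(L_X+R_Y)$, the same spectral inclusion $\sigma_{{\cal B}({\cal I})}(L_X+R_Y)\subset\mathbb R$ combined with the spectral mapping theorem for invertibility, and the same reduction of $e^{b(L_X+R_Y)}(L_X+R_Y)^nT$ via the binomial expansion (your sum correctly starts at $k=0$; the $k=1$ in the printed statement is a typo). For item (4) the paper offers no argument at all; of your two suggestions, prefer the complexification route, since $e^{itA}$ and the Gaussian integral do not make direct sense as written on a real space.
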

\begin{proof}
As in Theorem \ref{in}, each nontrivial factor in the expansion of $F_{X,Y}$ is either a unitary operator or an expansive operator. To prove the second assertion, assume that $F_{X,Y}$ is not invertible. Then $0\in \sigma_{{\cal B}(\cal I)}(F_{X,Y})=F(\sigma_{{\cal B}(\cal I)}( L_X+R_Y ))$, namely there exists $t\in \sigma_{{\cal B}(\cal I)}(L_X+R_Y)$ such that $F(t)=0$. As remarked above, $\sigma_{{\cal B}(\cal I)}(L_X+R_Y)\subset \mathbb R$ and then $t=0$ since the roots of $F$ are purely imaginary, namely $F(0)=0$.
\end{proof}

\begin{exmp}
Let $\Gamma$ stand for the usual Gamma function, which has simple poles in the nonpositive integers. Then
$$
g(z)=\frac{1}{z\Gamma(z)}=e^{\gamma z} \prod_{n\ge 1} \left(1+\frac{z}{n}\right) e^{-z/n},
$$
where $\gamma$ is the Euler-Mascheroni constant. Since $g(0)=1$, if $H=iX\in i M_h$ and $T\in{\cal I}$ then $g(L_H)$ is invertible in ${\cal B}({\cal I})$, and the inverse is a contraction. Hence
$$
\|\Gamma(H)HT\|_{\cal I}\le \|T\|_{\cal I}
$$
for any skew-adjoint $H$ and any $T\in {\cal I}$.
\end{exmp}

\medskip

\begin{rem}\label{eledos}
Since the involution is isometric for the $p$-norms in ${\cal I}^{\tau,p}$, it extends to an isometry $J:L^p\to L^p$. Then
$$
L^p=L^p_h\oplus i L^p_{h}
$$ 
where $L^p_h=\{T\in L^p: JT=T\}$. When $p=2$, the space $L^2(M,\tau)$ is the \textit{standard Hilbert space} where $M$ is represented \textit{via} the left multiplication representation, with inner product
$$
<W,Z>_{\tau}=\tau(WJ(Z)).
$$
If $X\in M_h$, each $L_X$ or $R_X$ is a symmetric operator of $L^2(M,\tau)$ since
$$
<L_XW,Z>_{\tau}=\tau(XWJ(Z))=\tau(WJ(XZ))=<W,L_XZ>_{\tau},
$$
and it can be proven that they admit self-adjoint extensions \cite{nelson}. Then if $F$ is a continuous function that maps $\mathbb R$ into $\mathbb R$, the same is true for the operator $F(L_X+R_Y)$, i.e. 
$$
F(L_X+R_Y)\in {\cal B}(\, L^2(M,\tau)\,)_{h}.
$$
In particular its norm can be computed using the spectral radius formula and of course
$$
\sigma_{{\cal B}(\, L^2(M,\tau)\,)}( F(L_X+R_Y))=F( \sigma_{{\cal B}(\, L^2(M,\tau)\,)}(L_X+R_Y)).
$$
\end{rem}

\section{Applications}\label{seccpr}

In this section we indicate how to apply Theorem \ref{ineq} to derive inequalities related to the Corach-Porta-Recht inequality, in the setting of normed ideals in a $C^*$-algebra. 

\subsection{The generalized CPR inequality}

\begin{thm}\label{grande}
Let $T\in {\cal I}$, where ${\cal I}$ is a normed ideal with an unitarily invariant norm in a $C^*$-algebra $M$. Let $S,R\in M$ be positive and invertible, $S=e^X, R=e^{-Y}$ with $X,Y\in M_h$. Let $(r,\theta)\in [0,2\pi)\times [-2,2]$, and let
$$
\Psi_{S,R,\theta,r}(T)=e^{i\theta}rT+e^{i\theta}STR^{-1} +S^{-1}TR,
$$
$$
C(r,\theta)= (r+1)^2+2(r+1)\cos(\theta)+1
$$
and
$$
b(r,\theta)= r(1-\cos\theta)C(r,\theta)^{-2}.
$$
Then $ \Psi_{S,R,\theta,r}\in B({\cal I})$ is invertible and
$$
\| \Psi_{S,R,\theta,r}(T) \|_{\cal I}\ge C(r,\theta) \|S^{b} T R^{-b}\|_{\cal I},
$$

with the exceptions $(r,\theta)=(0,\pi)$ where
\begin{equation}
\|STR^{-1}-S^{-1}TR\|_{\cal I}\ge 2\| XT-TY\|_{\cal I} \label{senoo}
\end{equation}
and $(r,\theta)=(-2,0)$, where
$$
\|STR^{-1}+S^{-1}TR-2T\|_{\cal I}\ge 2\| XT-TY\|_{\cal I}.
$$
\end{thm}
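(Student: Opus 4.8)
The plan is to realise $\Psi_{S,R,\theta,r}$ as an entire function of the single operator $Z:=L_X+R_Y\in{\cal B}({\cal I})$ and then run the Weierstrass-factorisation argument of Section~\ref{secmain}. Since $S=e^{X}$, $R=e^{-Y}$ with $X,Y\in M_h$, we have $STR^{-1}=e^{X}Te^{Y}=e^{Z}T$ and $S^{-1}TR=e^{-Z}T$, hence
$$
\Psi_{S,R,\theta,r}(T)=f(Z)\,T,\qquad f(z)=e^{i\theta}(r+e^{z})+e^{-z},
$$
an entire function of order $1$. By the lemma preceding Theorem~\ref{ineq}, $\sigma_{{\cal B}({\cal I})}(Z)\subset\mathbb R$, every $e^{isZ}$ ($s\in\mathbb R$) is an isometry of ${\cal I}$, and every $1\pm isZ$ is expansive and invertible (Theorem~\ref{disi}). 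One checks $|f(0)|^{2}=(r+1)^{2}+2(r+1)\cos\theta+1$ and $\mathrm{Re}\,(f'(0)/f(0))=r(1-\cos\theta)/|f(0)|^{2}$, so in the notation of the statement $C(r,\theta)=|f(0)|$ (the displayed formula being its square) and $b(r,\theta)=\mathrm{Re}\,(f'(0)/f(0))$ is the real part of the Weierstrass exponent of $f$. Substituting $W:=e^{bZ}T=S^{b}TR^{-b}$ (which ranges over all of ${\cal I}$ since $e^{bZ}$ is invertible) and putting $g(z):=f(z)e^{-bz}$, the asserted inequality becomes exactly $\|g(Z)W\|_{{\cal I}}\ge|f(0)|\,\|W\|_{{\cal I}}$ for all $W\in{\cal I}$; here $g$ is entire of order $1$ with the same zeros as $f$, $g(0)=f(0)$, and — the point of the choice of $b$ — has purely imaginary Weierstrass exponent, so that this amounts to showing the canonical product $\prod_{\zeta}(1-Z/\zeta)e^{Z/\zeta}$ over the zeros $\zeta$ of $f$ is expansive. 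Invertibility of $\Psi_{S,R,\theta,r}$ follows as in Theorem~\ref{ineq}(2) from the position of the zeros of $f$ relative to $\sigma(Z)\subset\mathbb R$.

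The scalar content of this is the inequality $|g(t)|\ge|g(0)|$ for $t\in\mathbb R$, equivalently that $t\mapsto|f(t)|^{2}e^{-2bt}$, with $|f(t)|^{2}=e^{2t}+e^{-2t}+2\cos\theta+r^{2}+2r(e^{t}+e^{-t}\cos\theta)$, has its global minimum at $t=0$; $t=0$ is a critical point by the choice of $b$, and on the range $r\in[0,2\pi)$, $\theta\in[-2,2]$ a direct examination confirms it is the minimum. On the standard Hilbert space $L^{2}(M,\tau)$ this already settles everything: $Z$ is self-adjoint there (Remark~\ref{eledos}), so $g(Z)$ is normal and $\|g(Z)W\|_{2}\ge\mathrm{dist}(0,\sigma(g(Z)))\,\|W\|_{2}=\inf_{t\in\sigma(Z)}|g(t)|\cdot\|W\|_{2}\ge|f(0)|\,\|W\|_{2}$.

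For a general normed ideal I would factor $f$ via the closed form $f(z)=2e^{i\theta/2}(\cosh(z+i\theta/2)-\cosh a)$, $\cosh a=-e^{i\theta/2}r/2$: using $\cosh(z+i\theta/2)-\cosh a=2\sinh\frac{z-\gamma_{1}}{2}\sinh\frac{z-\gamma_{2}}{2}$ with $\gamma_{j}=\log u_{j}$ the roots of $u^{2}+ru+e^{-i\theta}$, and expanding each $\sinh\frac{z-\gamma_{j}}{2}$ through $\frac{z-\gamma_{j}}{2}\prod_{k\ge1}(1+\frac{(z-\gamma_{j})^{2}}{4\pi^{2}k^{2}})$, each finite factor completes the square, $1+\frac{(Z-\gamma_{j})^{2}}{4\pi^{2}k^{2}}=(U+i(1-\lambda))(U-i(1+\lambda))$ with $U=\frac{Z-\mathrm{Re}\,\gamma_{j}}{2\pi k}$ of real spectrum and $\lambda=\frac{\mathrm{Im}\,\gamma_{j}}{2\pi k}$; choosing branches so $|\mathrm{Im}\,\gamma_{j}|\le\pi$ keeps $\lambda\ne\pm1$, so each linear piece is a positive scalar $|1\mp\lambda|$ times a block $1\mp\frac{i}{1\mp\lambda}U$ that is expansive and invertible by Theorem~\ref{disi}, and the $k=0$ factor $Z-\gamma_{j}$ is such a block scaled by $|\mathrm{Im}\,\gamma_{j}|$. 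The exponentials produced by the shift $z+i\theta/2$, the half-angle identities and the Weierstrass convergence factors then combine: their imaginary parts yield isometries, and their real parts recombine into the $e^{bZ}$ pulled out in front, while the accumulated positive scalars reassemble — here Lemma~\ref{divido} enters, handling the ratios $a/b>1$ that occur — into exactly $|f(0)|$. Since all these operators are functions of $Z$ and hence commute, and composition preserves expansiveness, peeling the factors off delivers $\|g(Z)W\|_{{\cal I}}\ge|f(0)|\,\|W\|_{{\cal I}}$. The two pairs $(r,\theta)=(0,\pi)$ and $(-2,0)$ set aside in the statement are the degenerate ones ($u_{1}=1$ or $u_{2}=1$, so $f(z)=-2\sinh z$, resp.\ $4\sinh^{2}(z/2)$, and $C=0$): there one peels the factor $Z$, resp.\ $Z^{2}$ — which produces $(L_X+R_Y)T=XT+TY$ — before running the same argument on the remaining $\sinh z/z$, resp.\ $(\sinh(z/2)/(z/2))^{2}$, to obtain the displayed inequalities with the sharp constant $2$.

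The crucial difficulty, and the reason this needs its own proof rather than following from Theorem~\ref{ineq}, is that the zeros of $f$ are \emph{not} purely imaginary: they occupy two vertical lines $\mathrm{Re}\,z=\pm\mu$ symmetric about the imaginary axis, because $u_{1}u_{2}=e^{-i\theta}$ forces $\mathrm{Re}\,\gamma_{1}+\mathrm{Re}\,\gamma_{2}=\ln|u_{1}u_{2}|=0$. Treated line by line, the corresponding factors are not expansive — the associated scalar functions dip below $1$ as $t\to+\infty$, resp.\ $-\infty$ — so the two families cannot be handled independently; only their interaction, together with the $e^{-bz}$ twist, restores expansiveness. Organising the grouping of the Weierstrass factors so that every resulting piece is a positive multiple of a dissipative building block \emph{and} the leftover real exponentials reconstitute $e^{bZ}$ precisely — equivalently, verifying that $f(z)e^{-bz}$ differs from a genuine product of such blocks only by a unimodular constant and an isometry of ${\cal I}$ — is the technical heart of the argument.
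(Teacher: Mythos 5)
Your opening paragraph coincides with the paper's own proof: the paper also sets $F(z)=e^{i\theta}(r+e^{z})+e^{-z}$, identifies $C=|F(0)|^{2}$ and $b=\mathrm{Re}\,(F'(0)/F(0))=r(1-\cos\theta)/|F(0)|^{2}$, and then disposes of the theorem in one line by asserting that $F$ has purely imaginary roots for $r\in[-2,2]$ and invoking Theorem \ref{ineq}. You assert the opposite, and on this point you are right: the roots of $F$ satisfy $e^{z}=u$ with $u^{2}+ru+e^{-i\theta}=0$, and writing $u_{1}+u_{2}=2\cos\frac{\alpha-\beta}{2}\,e^{i(\alpha+\beta)/2}$ for two unimodular roots shows that both can lie on the unit circle only when $r=0$ or $\theta=0$; for instance $r=1$, $\theta=\pi/2$ gives $u^{2}+u-i=0$ with $|u|\approx 0.69$ and $1.44$, so the zeros of $F$ sit on the lines $\mathrm{Re}\,z=\pm\ln 1.44$. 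Hence for $r\neq 0$ and $\theta\neq 0$ Theorem \ref{ineq} is not applicable and a genuinely new argument is required; spotting this is the most valuable part of your proposal.

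The problem is that your substitute argument does not supply that new argument. In the decomposition $F(Z)=4e^{i\theta/2}\sinh\frac{Z-\gamma_{1}}{2}\sinh\frac{Z-\gamma_{2}}{2}$ with $\gamma_{j}=\mu_{j}+i\nu_{j}$, the factorization you describe (the $k$-th factor of $\sinh\frac{Z-\gamma_{j}}{2}$ written as $(1-\lambda^{2})$ times two expansive blocks, the $k=0$ factor as $|\nu_{j}|$ times an expansive block) extracts from the $j$-th family exactly the scalar $\frac{|\nu_{j}|}{2}\prod_{k\ge1}\bigl(1-\frac{\nu_{j}^{2}}{4\pi^{2}k^{2}}\bigr)=|\sin\frac{\nu_{j}}{2}|$ and no real exponential whatsoever, since the canonical product for $\sinh$ carries no convergence factors. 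What this yields is $\|F(Z)T\|_{\cal I}\ge 4|\sin\frac{\nu_{1}}{2}\sin\frac{\nu_{2}}{2}|\;\|T\|_{\cal I}$, whereas the target is $\|F(Z)T\|_{\cal I}\ge 4|\sinh\frac{\gamma_{1}}{2}\sinh\frac{\gamma_{2}}{2}|\;\|S^{b}TR^{-b}\|_{\cal I}$; since $|\sinh\frac{\gamma_{j}}{2}|^{2}=\sinh^{2}\frac{\mu_{j}}{2}+\sin^{2}\frac{\nu_{j}}{2}$, your constant is strictly smaller than $|F(0)|$ precisely when $\mu_{j}\neq0$, i.e.\ in every case where a new argument is actually needed (for $r=1$, $\theta=\pi/2$ it gives $2.08$ against $\sqrt{5}\approx 2.24$), and the factor $e^{bZ}$ never materializes. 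Your closing sentences concede that making the scalars ``reassemble into exactly $|f(0)|$'' and the exponentials ``reconstitute $e^{bZ}$'' is the technical heart of the argument; that is exactly the step that is missing, and the bookkeeping above shows the grouping you actually wrote down cannot produce it. (The $L^{2}$ case via the spectral radius is fine modulo the unverified claim that $|F(t)|^{2}e^{-2bt}$ attains its minimum over $\mathbb R$ at $t=0$.) As it stands your proposal, like the paper's one-line proof, establishes the stated inequality only when the roots really are purely imaginary, i.e.\ for $r=0$ or $\theta\in\{0,\pi\}$ --- which happens to cover every corollary the paper later draws from Theorem \ref{grande}, but not the theorem as stated.
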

\begin{proof}
Let $F(z)=e^{i\theta}(r+e^z)+e^{-z}$. Then a straightforward computation shows that $F$ has purely imaginary roots if and only if $r\in [-2,2]$. Moreover, $|F(0)|=C(r,\theta)\ne 0$ except for the two cases mentioned above, and in both exceptions, $F'(0)=-2$. The coefficient $b=b(r,\theta)$ is given by the real part of $\alpha$ in Remark \ref{entera}, just note that $e^{b X}=S^{b}$ and $e^{b Y}=R^{-b}$. Now apply Theorem \ref{ineq} to obtain the inequalities stated, observing that in both exceptions, 
$$
\alpha=\frac{d }{dz} \ln(F(z)/z)\left|_{z=0} \right.=0 .\qed
$$
\end{proof}

\begin{rem}\label{compl1}
If $r=0$ and $\theta\ne\pi$, then we obtain 
\begin{equation}\label{fujip}
\|e^{i\theta}STR^{-1}+S^{-1}TR\|_{\cal I}\ge \sqrt{2(1+\cos\theta)} \|T\|_{\cal I}.
\end{equation}
This inequality extends to 
\begin{equation}\label{fuji}
\|e^{i\theta}STR^{-1}+(S^*)^{-1}TR^*\|_{\cal I}\ge \sqrt{2(1+\cos\theta)} \|T\|_{\cal I}.
\end{equation}
where $S,R$ are just invertible. Indeed, put $S=|S|U$, $R=|R|V$ the polar decompositions of $S,R$. Then the left side of (\ref{fuji}) reads
$$
\|e^{i\theta}|S|U TV^*|R|^{-1}+|S|^{-1}UTV^*|R|\|_{\cal I}
$$
which by (\ref{fujip}) is greater or equal than
$$
\sqrt{2(1+\cos\theta)}\; \|UTV^*\|_{\cal I}=\sqrt{2(1+\cos\theta)}\|T\|_{\cal I}.
$$
\end{rem}

For $\theta=0,S=S^*=R$ and ${\cal I}=M=\bh$, inequality (\ref{fuji}) is known as the CPR inequality since its due to Corach-Porta and Recht \cite{cpr1}. Then Pedersen extended it for $S=R$ (not necessarily self-adjoint). Later Fujii-Fujii-Furuta and Nakamoto \cite{fujis} proved it for $R^*=R$, $S^*=S$, $R\ne S$. Then Kittaneh \cite{kita} proved it for general invertible $R,S\in \bh$, and unitarily invariant norms in $\bh$, that is
\begin{equation}\label{kita}
|||STR^{-1}+(S^*)^{-1}TR^*|||\ge 2 |||T|||.
\end{equation}

Kittaneh proves this inequality by showing that is in fact equivalent to the so called arithmetic-geometric-mean inequality \cite{agmi, agmi2}, that states
\begin{equation}\label{mean}
|||AA^*X+XBB^*|||\ge 2|||A^*XB|||
\end{equation}
for $A,B,X\in\bh$ and any unitarily invariant norm on $\bh$. Let us prove that Theorem \ref{grande} implies (\ref{mean}) in any normed ideal ${\cal I}$ with an unitarily invariant norm. Note that $X\to Y$ in $M$ implies $XT\to YT$ in ${\cal I}$ by property (\ref{ideal}) above. 

\begin{prop}\label{agmi}
Let $A,B\in M$ and $X\in {\cal I}$. Then
$$
\|AA^*X+XBB^*\|_{\cal I}\ge 2\|A^*XB\|_{\cal I}.
$$
\end{prop}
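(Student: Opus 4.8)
The plan is to derive the inequality from the case $\theta=0$ of (\ref{fujip}), namely
$$\|STR^{-1}+S^{-1}TR\|_{\cal I}\ \ge\ 2\,\|T\|_{\cal I}\qquad(S,R\ \mbox{positive invertible},\ T\in{\cal I}),$$
which is the specialization $r=0$, $\theta=0$ of Theorem~\ref{grande}, together with the continuity of the ${\cal I}$-valued triple product $(U,Y,V)\mapsto UYV$ furnished by (\ref{ideal}). The point is that $AA^*X+XBB^*$ only involves the \emph{positive} elements $AA^*$, $BB^*$, so one first proves a positive version and then removes the absolute values.

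First I would establish a perturbed positive version. Fix $\varepsilon>0$ and set $P_\varepsilon=(AA^*+\varepsilon)^{1/2}$, $Q_\varepsilon=(BB^*+\varepsilon)^{1/2}$, which are positive and invertible (pass to the unitization of $M$ if $M$ is non-unital; there ${\cal I}$ is still a normed ideal with a unitarily invariant norm). Applying the displayed inequality with $S=P_\varepsilon$, $R=Q_\varepsilon$, $T=P_\varepsilon XQ_\varepsilon\in{\cal I}$, and using $P_\varepsilon TQ_\varepsilon^{-1}=P_\varepsilon^2X=(AA^*+\varepsilon)X$ and $P_\varepsilon^{-1}TQ_\varepsilon=XQ_\varepsilon^2=X(BB^*+\varepsilon)$, one gets
$$\|(AA^*+\varepsilon)X+X(BB^*+\varepsilon)\|_{\cal I}\ \ge\ 2\,\|(AA^*+\varepsilon)^{1/2}X(BB^*+\varepsilon)^{1/2}\|_{\cal I}.$$
Now let $\varepsilon\to0$. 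The left-hand side is $\|AA^*X+XBB^*+2\varepsilon X\|_{\cal I}\to\|AA^*X+XBB^*\|_{\cal I}$; on the right, since $0\le(t+\varepsilon)^{1/2}-t^{1/2}\le\varepsilon^{1/2}$ on $[0,\infty)$, we have $(AA^*+\varepsilon)^{1/2}\to|A^*|:=(AA^*)^{1/2}$ and $(BB^*+\varepsilon)^{1/2}\to|B^*|:=(BB^*)^{1/2}$ in operator norm, so by (\ref{ideal}) the right-hand side tends to $2\||A^*|\,X\,|B^*|\|_{\cal I}$. Hence
$$\|AA^*X+XBB^*\|_{\cal I}\ \ge\ 2\,\||A^*|\,X\,|B^*|\|_{\cal I}.$$

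It then suffices to check $\|A^*XB\|_{\cal I}\le\||A^*|\,X\,|B^*|\|_{\cal I}$, which I would prove by an approximate polar decomposition. For $\delta>0$ put $V_\delta=A^*(AA^*+\delta)^{-1/2}\in M$ and $W_\delta=(BB^*+\delta)^{-1/2}B\in M$. Then $V_\delta^*V_\delta=(AA^*)(AA^*+\delta)^{-1}$ and $W_\delta W_\delta^*=(BB^*)(BB^*+\delta)^{-1}$ are positive with norm $\le1$, so $\|V_\delta\|\le1$ and $\|W_\delta\|\le1$. On the other hand, with $f_\delta(t)=t^{1/2}(t+\delta)^{-1/2}$ one has $V_\delta|A^*|=A^*f_\delta(AA^*)$ and $|B^*|W_\delta=f_\delta(BB^*)B$; and from the elementary scalar bound $\sup_{t\ge0}t\,(1-f_\delta(t))^2\le\delta$ it follows that $V_\delta|A^*|\to A^*$ and $|B^*|W_\delta\to B$ in operator norm as $\delta\to0$. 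Consequently, by (\ref{ideal}),
$$\|A^*XB\|_{\cal I}\ =\ \lim_{\delta\to0}\big\|\,V_\delta\,\big(|A^*|\,X\,|B^*|\big)\,W_\delta\,\big\|_{\cal I}\ \le\ \||A^*|\,X\,|B^*|\|_{\cal I},$$
and combining with the previous display yields $\|AA^*X+XBB^*\|_{\cal I}\ge2\|A^*XB\|_{\cal I}$.

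I expect this last step to be the genuine obstacle. It is merely an approximate form of the polar decomposition $A=|A^*|U$, and it is forced on us because the partial isometry $U$ need not lie in $M$: one cannot simply write $\|A^*XB\|_{\cal I}=\|U^*(|A^*|X|B^*|)V\|_{\cal I}\le\||A^*|X|B^*|\|_{\cal I}$ as would be legitimate in a von Neumann algebra --- in particular in the $L^p(M,\tau)$ setting, where that single line closes the argument. Instead the contractions $V_\delta,W_\delta$ must be manufactured by functional calculus and the two scalar estimates verified by hand. The only other point needing a word is the unitization used in the perturbation step, which is harmless.
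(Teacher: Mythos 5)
Your proof is correct, and while it follows the same broad outline as the paper's (specialize inequality (\ref{fujip}), perturb by $\varepsilon$ to reach non-invertible operators, then strip off the absolute values), it differs in the one step that actually matters. The paper first proves the inequality for invertible $A,B$ by the substitution $S=A$, $R=B$, $T=A^*XB$, then handles positive $A,B$ by the $\varepsilon$-trick, and finally disposes of general $A,B$ by writing $A=|A|U$, $B=|B|V$ and asserting $2\,\||A|X|B|\|_{\cal I}=2\,\|U^*|A|X|B|V\|_{\cal I}=2\,\|A^*XB\|_{\cal I}$. That last step is clean in $\bh$ or any von Neumann algebra, but in a general $C^*$-algebra $M$ the partial isometries of the polar decomposition need not belong to $M$, so the paper's final equality is not available verbatim in the stated generality. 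Your approximate polar decomposition --- the contractions $V_\delta=A^*(AA^*+\delta)^{-1/2}$ and $W_\delta=(BB^*+\delta)^{-1/2}B$ together with the scalar estimate $\sup_{t\ge0}t(1-f_\delta(t))^2\le\delta$ --- repairs exactly this point and keeps the whole argument inside $M$ (up to unitization), so it is the more robust route for the $C^*$-setting the paper claims. Your other deviation, applying (\ref{fujip}) to the square roots $P_\varepsilon,Q_\varepsilon$ with $T=P_\varepsilon XQ_\varepsilon$, merges the paper's ``invertible'' and ``positive'' steps into one and is equally valid. All the estimates you invoke ($\|V_\delta\|\le1$, $\|W_\delta\|\le1$, the two scalar bounds, and the repeated use of (\ref{ideal}) to pass to limits in $\|\cdot\|_{\cal I}$) check out.
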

\begin{proof}
If we put $A=S$, $R=B$, $T=A^*XB$ in eq. (\ref{fujip}), we obtain this inequality for invertible $A,B\in M$. Assume now that $A,B$ are positive (not necessarily invertible). Let $A_{\varepsilon}=A+\varepsilon$, $B_{\varepsilon}=B+\varepsilon$. Then the inequality holds for $A_{\varepsilon},B_{\varepsilon}$ and any $X\in I$
$$
\|A_{\varepsilon}A_{\varepsilon}^*X+XB_{\varepsilon}B_{\varepsilon}^*\|_{\cal I}\ge 2\|A_{\varepsilon}^*XA_{\varepsilon}\|_{\cal I}.
$$
Letting $\varepsilon\to 0$ proves the inequality for positive $A,B\in M$. Now if $A=|A|U$ and $B=|B|V$ (polar decomposition), then
\begin{eqnarray}
\|AA^*X+XBB^*\|_{\cal I}&=& \| \,|A|^2X+X|B|^2\|_{\cal I}\ge 2\|\,|A|X|B|\,\|_{\cal I}\nonumber\\
\nonumber\\
&=& 2 \|U^*|A|X|B|V\|_{\cal I}=2\|A^*XB\|_{\cal I}.\nonumber
\end{eqnarray}
\end{proof}

\begin{rem}
If $\theta=0$, $r\ne -2$ then we have
$$
\|rT+STR^{-1}+S^{-1}TR\|_{\cal I}\ge \mid r+2\mid  \|T\|_{\cal I},
$$
which was proved for matrices by Zhan \cite[Cor. 7]{zhan}. It was obtained also by Bhatia and Parthasarathy \cite[Th. 5.1]{bhapar}, where they also show that the inequality is false for $r\notin (-2,2]$.
\end{rem}

\medskip

\begin{exmp}\label{compl2}
As a slight variation of Theorem \ref{grande},  consider $F(z)=e^z-e^{i\theta}$. Then (for $\theta=0$) we obtain 
$$
\|STR^{-1}-T\|_{\cal I}\ge \|S^{\frac12}XTR^{-\frac12}-S^{\frac12}TYR^{-\frac12}\|_{\cal I}
$$
for $S=e^X, R=e^{-Y}$ positive invertible and  $T\in {\cal I}$, and also
$$
\|STR^{-1}-e^{i\theta}T\|_{\cal I}\ge \sqrt{2(1-\cos\theta)}\|S^b TR^{-b}\|_{\cal I}.
$$
for $\theta\ne 2k\pi$, where $b=Re(\alpha)=\frac{1+\cos\theta}{2(1-\cos\theta)}$. In particular, for $\theta=\pi$ we get
$$
\|STR^{-1}+T\|_{\cal I}\ge 2\|T\|_{\cal I}.
$$
\end{exmp}

\medskip

\begin{rem}\label{analit}
The inequalities in this section can be rewritten if we note that $L_{F(X)}=F(L_X)$ for any  entire function $F$. For instance $$
\sinh(L_X-R_Y)=\sinh(L_X)\cosh(R_Y)-\sinh(R_Y)\cosh(L_X)
$$
and then (\ref{senoo}) reads
$$
\|\sinh(X)T\cosh(Y)-\cosh(X)T\sinh(Y)\|_{\cal I}\ge \|XT-TY\|_{\cal I}
$$
for $X,Y\in M_h$ and $T\in{\cal I}$. In particular
$$
\|\sinh(X)T\|_{\cal I}\ge \|XT\|_{\cal I}.
$$
Note also that equation (\ref{senoo}) can be easily generalized to non self-adjoint $R,S\in M$,
$$
\|STR^{-1}-(S^*)^{-1}TR^*\|_{\cal I}\ge 2\| XT-TY\|_{\cal I}
$$
where $e^X=|S|$ and $e^Y=|R|$ and now $S=U|S|$, $R=V|R|$ (right polar decomposition, i.e $|S|^2=S^*S$ and $|R|^2=R^*R$). This inequality can be found in \cite[Th. 4]{kosaki} and \cite[p. 223]{bhapar}; see also \cite[Th. 5]{kosaki} and Remark \ref{expo} below.
\end{rem}

\section{Second main inequality}\label{secmero}

In this section we compare quotients of entire functions by pairing their root sets. It should be noted that the interlacing condition of the roots $\cdots<w_k<z_k<w_{k+1}<z_{k+1}<\cdots$ that we require below for a pair of entire functions $F,G$ is related to the property that the quotient $F/G$ maps the upper half-plane into itself, thus the theorem below is related to a  theorem of L\"owner \cite{lowner} that states that a function $g:[0,+\infty)\to [0,+\infty)$ admits an analtytic extension that maps the upper-half plane into itself if and only if $g$ is operator monotone. As usual $M$ is a $C^*$-algebra and ${\cal I}$ a normed ideal in $M$ with an unitarily invariant norm.

\begin{thm}\label{comparo}
Let $X,Y\in M_h$. Let $F$, $G$ be as in Theorem \ref{ineq}, with $F(0)=G(0)=1$ and $F'(0)-G'(0)$ purely imaginary. Let $\{iz_k\}$ (resp. $\{iw_k\}$) be the roots of $F$ (resp. $G$). If for each positive (resp. negative) $z_k$ there is exactly one positive (resp. negative) $w_k$ with $z_k/w_k>1$, then the quotient
$$
H(L_X+R_Y)=F(L_X+R_Y)G^{-1}(L_X+R_Y)
$$
is a contraction of ${\cal B}({\cal I})$.
\end{thm}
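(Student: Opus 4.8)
The plan is to reduce the statement to the already-established tools by writing $H = F/G$ as a convergent product of elementary factors, where each factor is individually a contraction when applied to $L_X + R_Y$. First I would note that, by the hypotheses, both $F$ and $G$ have Weierstrass expansions with purely imaginary nonzero roots and $F(0)=G(0)=1$, so
$$
F(z)=e^{\alpha z}\prod_k\left(1-\frac{iz}{z_k}\right)e^{iz/z_k},\qquad
G(z)=e^{\beta z}\prod_k\left(1-\frac{iz}{w_k}\right)e^{iz/w_k},
$$
with $z_k,w_k\in\mathbb R-\{0\}$, $\alpha=F'(0)$, $\beta=G'(0)$, and $\alpha-\beta$ purely imaginary. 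Since the pairing matches each $z_k$ with a unique $w_k$ of the same sign with $z_k/w_k>1$, I would group the two products termwise and write
$$
H(z)=\frac{F(z)}{G(z)}=e^{(\alpha-\beta)z}\prod_k\frac{1-iz/z_k}{1-iz/w_k}\,e^{iz(1/z_k-1/w_k)}.
$$
The exponential prefactor is unitary on $L_X+R_Y$ because $\alpha-\beta$ is purely imaginary and $\sigma_{\cal B({\cal I})}(L_X+R_Y)\subset\mathbb R$ (so $e^{(\alpha-\beta)(L_X+R_Y)}$ has the form $e^{i(L_C+R_D)}$ with $C,D\in M_h$, hence is an isometry of ${\cal I}$ by the Lemma preceding Theorem \ref{ineq}). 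It thus suffices to show that each remaining factor, and the pure-exponential correction factors $e^{iz(1/z_k-1/w_k)}$, act as contractions, and that the partial products converge in ${\cal B}({\cal I})$.

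The heart of the argument is the single-factor estimate: for $a,b\in\mathbb R-\{0\}$ with $a/b>1$ (here $a=z_k$, $b=w_k$, same sign), the operator
$$
\left(1-\frac{i}{a}(L_X+R_Y)\right)\left(1-\frac{i}{b}(L_X+R_Y)\right)^{-1}
$$
is a contraction of ${\cal B}({\cal I})$. To see this I would rewrite the quotient as an affine function of a single contraction. Put $N=L_X+R_Y$, which is Hermitian on ${\cal I}$ with real spectrum, and observe that $(1-\frac{i}{b}N)^{-1}$ is a contraction (it is $1-sT$ with $T$ dissipative, by Theorem \ref{disi}) — indeed both $N/b$ and $-N/b$ are dissipative since $N$ is Hermitian. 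A direct computation gives
$$
\left(1-\frac{i}{a}N\right)\left(1-\frac{i}{b}N\right)^{-1}
=\frac{b}{a}+\left(1-\frac{b}{a}\right)\left(1-\frac{i}{b}N\right)^{-1}.
$$
Since $a/b>1$ forces $b/a\in(0,1)$, this is exactly of the form $\frac{b}{a}+(1-\frac{b}{a})A$ with $A$ a contraction; but the relevant fact is that the convex-combination-type operator $T'=\frac{b'}{a'}+(1-\frac{b'}{a'})A$ is invertible with contractive inverse whenever $a'/b'>1$ and $A$ is a contraction — this is precisely Lemma \ref{divido} applied with $a'=a$, $b'=b$ rescaled appropriately, so the \emph{inverse} of the displayed operator is a contraction. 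To get that the operator itself is a contraction rather than its inverse, I would instead apply the same identity with the roles of $a$ and $b$ swapped: writing $c=1/a$, $d=1/b$ one has $0<|c|<|d|$, and $(1-icN)(1-idN)^{-1}$ has the form $\frac dc\big(\cdots\big)$; the clean route is to observe directly that $(1-icN)(1-idN)^{-1} = \frac cd + (1-\frac cd)(1-idN)^{-1}$, a genuine convex-type combination of the identity and a contraction with coefficients $\frac cd\in(0,1)$ and $1-\frac cd\in(0,1)$ (when $c,d$ have the same sign and $|c|<|d|$), hence a contraction by the triangle inequality. The same computation, read the other way, handles the exponential correction terms, since $e^{iz(1/z_k-1/w_k)}$ evaluated on $N$ is again of the form $e^{i(L_C+R_D)}$ with $C,D\in M_h$ and is therefore an isometry — actually it is the isometry absorbed once we note $\sum_k(1/z_k-1/w_k)$ need not converge, so these must be kept paired with the fractional factors, and one checks the paired product $\prod_k\frac{1-iz/z_k}{1-iz/w_k}e^{iz(1/z_k-1/w_k)}$ converges uniformly on compacts by the standard Weierstrass convergence argument (comparing $\rho\le 1$ exponents), which transfers to norm convergence of the partial products in ${\cal B}({\cal I})$ on the bounded set $\sigma(N)$ via the analytic functional calculus.

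The main obstacle I anticipate is bookkeeping the convergence correctly: each individual fraction $\frac{1-iz/z_k}{1-iz/w_k}$ tends to $z_k/w_k\ne 1$ rather than to $1$ as $k\to\infty$, so the naive product of fractions diverges and one genuinely needs the Weierstrass correction factors $e^{iz/z_k}$ and $e^{-iz/w_k}$ grouped \emph{within} each term to restore convergence. The delicate point is then to verify that the corrected single term $\big(1-\frac{i}{z_k}N\big)e^{iN/z_k}\big(1-\frac{i}{w_k}N\big)^{-1}e^{-iN/w_k}$ is still a contraction of ${\cal B}({\cal I})$ — which it is, since it factors as (isometry)$\cdot$(contraction)$\cdot$(isometry) after commuting the exponentials past the rational factors (they all commute, being functions of the single operator $N$), regrouping $\big(1-\frac{i}{z_k}N\big)\big(1-\frac{i}{w_k}N\big)^{-1}$ as a contraction by the computation above and $e^{iN/z_k}e^{-iN/w_k}=e^{i(\frac1{z_k}-\frac1{w_k})N}$ as an isometry. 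Once each corrected term is a contraction and the partial products converge in operator norm on $\sigma(N)\subset\mathbb R$, the limit $H(N)=H(L_X+R_Y)$ is a product (in the limit) of contractions, hence a contraction, which is the claim. Apart from this grouping subtlety the argument is entirely parallel to the proof of Theorem \ref{ineq}, simply replacing "expansive factors" by "contractive quotient factors."
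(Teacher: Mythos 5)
Your proposal is correct and follows essentially the same route as the paper: pair each $z_k$ with its matched $w_k$, write $e^{-(F'(0)-G'(0))z}H(z)$ as an infinite product of paired Weierstrass factors, and show that each paired factor, evaluated at $L_X+R_Y$, is a contraction while the exponential corrections are isometries. The only (harmless) difference is that you prove the single-factor bound via the convex-combination identity $\bigl(1-\frac{i}{a}N\bigr)\bigl(1-\frac{i}{b}N\bigr)^{-1}=\frac{b}{a}+\bigl(1-\frac{b}{a}\bigr)\bigl(1-\frac{i}{b}N\bigr)^{-1}$ together with the triangle inequality, whereas the paper writes the same factor as the inverse of $\frac{z_k}{w_k}+\bigl(1-\frac{z_k}{w_k}\bigr)\bigl(1+\frac{iz}{z_k}\bigr)^{-1}$ and invokes Lemma \ref{divido}; your version is if anything slightly more elementary, and your care in keeping the exponential correction terms grouped with the rational factors for convergence makes explicit a point the paper leaves implicit.
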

\begin{proof}
We can write  $e^{-[F'(0)-G'(0)]z}H(z)$ as an infinite product
$$
\prod_{\{k:z_k,w_k>0\}}(1-\frac{iz}{z_k})(1-\frac{iz}{w_k})^{-1}e^{\frac{iz}{z_k}-\frac{iz}{w_k}}\,\prod_{\{k:z_k,w_k<0\}}(1-\frac{iz}{z_k})(1-\frac{iz}{w_k})^{-1}e^{\frac{iz}{z_k}-\frac{iz}{w_k}}.
$$
Now an elementary computation shows that
$$
(1-\frac{iz}{z_k})(1-\frac{iz}{w_k})^{-1}=\left[\frac{z_k}{w_k}+ (1-\frac{z_k}{w_k})(1+\frac{iz}{z_k})^{-1}\right]^{-1}.
$$
Each of these factors (when evaluated in $L_X+R_Y$) is a contraction by Lemma \ref{divido}, with $a=z_k$, $b=w_k$ and 
$$
A=(1+\frac{i}{z_k}(L_X+R_Y))^{-1}.\qed
$$
\end{proof}

\begin{cor}
Let $F,G$ be as in Theorem \ref{ineq}, with $F(0)=G(0)=1$, let $A=L_X+R_Y$ with $X,Y\in M_h$. Then
$$
\|F(A)T\|_{\cal I}\le\|G(A)T\|_{\cal I}
$$
for any $T\in {\cal I}$. 
\end{cor}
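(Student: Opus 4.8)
The plan is to read this off directly from Theorem~\ref{comparo}. Implicit in the statement (or, if one reads ``$F,G$ as in Theorem~\ref{ineq}'' literally, one must add it) is the interlacing hypothesis on the roots $\{iz_k\}$ of $F$ and $\{iw_k\}$ of $G$, together with $F'(0)-G'(0)$ purely imaginary; I would assume these, since without the interlacing the inequality can fail. Under them, Theorem~\ref{comparo} asserts precisely that $H(A):=F(A)G^{-1}(A)$ is a contraction of ${\cal B}({\cal I})$, where $A=L_X+R_Y$.

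First I would note that $G(A)$ is invertible in ${\cal B}({\cal I})$: this is part~(2) of Theorem~\ref{ineq}, applicable because $G(0)=1\ne 0$. Hence $G^{-1}(A)\in{\cal B}({\cal I})$; moreover, since the interlacing cancels every pole of $1/G$, the quotient $H=F/G$ is again entire of order $\le 1$ with purely imaginary roots, so $H(A)$ is well defined by the analytic functional calculus. Multiplicativity of the functional calculus for functions of the single operator $A$ then gives the factorization $F(A)=H(A)\,G(A)$ inside ${\cal B}({\cal I})$.

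Finally, fix $T\in{\cal I}$ and set $S=G(A)T\in{\cal I}$. Then $F(A)T=H(A)G(A)T=H(A)S$, and since $\|H(A)\|_{{\cal B}({\cal I})}\le 1$,
$$
\|F(A)T\|_{\cal I}=\|H(A)S\|_{\cal I}\le\|S\|_{\cal I}=\|G(A)T\|_{\cal I},
$$
which is the claim.

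I do not expect any real obstacle: this is essentially a one-line corollary of Theorem~\ref{comparo}. The only points requiring a moment of care are the bookkeeping that $H=F/G$ genuinely is an entire function of order $\le 1$ to which the functional calculus applies (so that $F(A)=H(A)G(A)$ is legitimate at the level of bounded operators on ${\cal I}$, and $G^{-1}(A)$ coincides with $(1/G)(A)$, which is fine because $\sigma_{{\cal B}({\cal I})}(A)\subset\mathbb R$ and $G$ has no root there), and making explicit that it is the interlacing condition of Theorem~\ref{comparo}, not merely the hypotheses of Theorem~\ref{ineq}, that the argument consumes.
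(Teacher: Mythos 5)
Your proposal is correct and is exactly the argument the paper intends: the corollary is an immediate consequence of Theorem \ref{comparo} (whose interlacing and $F'(0)-G'(0)$ hypotheses are indeed implicitly carried over), via $\|F(A)T\|_{\cal I}=\|H(A)G(A)T\|_{\cal I}\le\|G(A)T\|_{\cal I}$ once $G(A)$ is known to be invertible by Theorem \ref{ineq}(2). One inessential slip: $H=F/G$ is not entire (interlacing forces $z_k\ne w_k$, so the poles at $iw_k$ survive), but this does not matter, since $H(A)$ is defined in Theorem \ref{comparo} directly as $F(A)G(A)^{-1}$ --- equivalently, $H$ is holomorphic on a neighbourhood of $\sigma_{{\cal B}({\cal I})}(A)\subset\mathbb R$, so the factorization $F(A)=H(A)G(A)$ stands.
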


\begin{cor}\label{comparo2}
Let $F$ be as in Theorem \ref{ineq}, with $F(0)=1$, let $A=L_X+R_Y$ with $X,Y\in M_h$. Then 
$$
\|F(sA)F(A)^{-1}T\|_{\cal I}\le \|e^{F'(0)(s-1)A} T \|_{\cal I},
$$
for any $T\in {\cal I}$, for any $s\in (0,1)$.
\end{cor}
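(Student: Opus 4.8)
The plan is to run the argument behind Theorem~\ref{comparo} directly, taking care not to assume that $F'(0)$ is purely imaginary. Since $F(0)=1$, the Weierstrass data of $F$ has $\lambda=1$ and $n=0$, so
$$
F(z)=e^{\alpha z}\prod_k\Bigl(1-\frac{iz}{z_k}\Bigr)e^{iz/z_k},\qquad z_k\in\mathbb R-\{0\},
$$
and, by the formula in Remark~\ref{entera}, $\alpha=F'(0)$. Write $A=L_X+R_Y$ and fix $s\in(0,1)$. Recall from the lemma preceding Theorem~\ref{ineq} that $\sigma_{{\cal B}({\cal I})}(A)\subset\mathbb R$, that $1\pm\frac{i}{r}A$ is expansive and invertible for every $r\ne 0$, and that $\|e^{it A}T\|_{\cal I}=\|T\|_{\cal I}$ for every real $t$; recall from Theorem~\ref{ineq} that $F(A)$ is invertible in ${\cal B}({\cal I})$, while $F(sA)=F(L_{sX}+R_{sY})$ is defined by the analytic functional calculus. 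Since $F$ is zero-free on a neighbourhood of $\sigma_{{\cal B}({\cal I})}(A)$ (its nonzero roots are purely imaginary and $F(0)\ne 0$), the quotient $F(sz)/F(z)$ is holomorphic there and $F(sA)F(A)^{-1}=\bigl(F(sz)/F(z)\bigr)(A)$ by multiplicativity of the functional calculus.

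The heart of the proof is to show that the function
$$
H_0(z):=e^{-(s-1)\alpha z}\frac{F(sz)}{F(z)}=\prod_k\Bigl(1-\frac{isz}{z_k}\Bigr)\Bigl(1-\frac{iz}{z_k}\Bigr)^{-1}e^{i(s-1)z/z_k}
$$
(the cancellation of Weierstrass factors is immediate, using $1-isz/z_k=1-iz/(z_k/s)$ and $e^{isz/z_k}=e^{iz/(z_k/s)}$) evaluates at $A$ to a contraction of ${\cal B}({\cal I})$. By the elementary identity used in the proof of Theorem~\ref{comparo}, applied with the root parameters $z_k/s$ and $z_k$,
$$
\Bigl(1-\frac{isz}{z_k}\Bigr)\Bigl(1-\frac{iz}{z_k}\Bigr)^{-1}=\Bigl[\frac1s+\Bigl(1-\frac1s\Bigr)\Bigl(1-\frac{isz}{z_k}\Bigr)^{-1}\Bigr]^{-1},
$$
so, evaluating at $A$ and using that $(1-\frac{i}{z_k/s}A)^{-1}=(1-\frac{is}{z_k}A)^{-1}$ is a contraction, Lemma~\ref{divido} applies with $a/b=(z_k/s)/z_k=1/s>1$ (here $s\in(0,1)$) and shows that each such factor is a contraction of ${\cal B}({\cal I})$; each convergence factor $e^{i(s-1)A/z_k}$ is a surjective isometry because $(s-1)/z_k\in\mathbb R$. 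Hence every finite partial product of $H_0(A)$ is a contraction, and passing to the limit exactly as in the proof of Theorem~\ref{in} — the partial products converge to $H_0$ uniformly on a contour around $\sigma_{{\cal B}({\cal I})}(A)$, since the Weierstrass products of $F(z)$ and of $F(sz)=e^{\alpha s z}\prod_k(1-iz/(z_k/s))e^{iz/(z_k/s)}$ converge uniformly on compacta and $F$ is zero-free near the spectrum — we conclude $\|H_0(A)\|_{{\cal B}({\cal I})}\le1$.

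Finally, $F(sA)F(A)^{-1}=e^{(s-1)\alpha A}H_0(A)$, and the two factors commute, being holomorphic functions of the single operator $A$; therefore, for every $T\in{\cal I}$,
$$
\|F(sA)F(A)^{-1}T\|_{\cal I}=\bigl\|H_0(A)\,e^{(s-1)\alpha A}T\bigr\|_{\cal I}\le\bigl\|e^{(s-1)\alpha A}T\bigr\|_{\cal I}=\bigl\|e^{F'(0)(s-1)A}T\bigr\|_{\cal I},
$$
which is the assertion. I do not expect a genuine obstacle: the only point that prevents us from quoting Theorem~\ref{comparo} verbatim is that $F'(0)=\alpha$ need not be purely imaginary, which is why we peel off the \emph{entire} exponential $e^{(s-1)\alpha z}$ rather than just its real part; after that, the argument is the one already used for Theorems~\ref{in} and~\ref{comparo}, and the only care needed is the (routine) uniform-convergence bookkeeping for the infinite product on a fixed neighbourhood of $\sigma_{{\cal B}({\cal I})}(A)$, where $H_0$ is merely meromorphic rather than entire.
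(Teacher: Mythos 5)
Your proof is correct and follows essentially the same route as the paper: the paper applies Theorem~\ref{comparo} to the auxiliary functions $\bar F(z)=e^{-F'(0)z}F(z)$ and $G_s(z)=\bar F(sz)$, whose quotient is exactly your $H_0$, and then commutes the peeled-off exponential $e^{F'(0)(s-1)A}$ past the resulting contraction. You merely unwind the factor-by-factor argument of Theorem~\ref{comparo} in place of citing it, which changes nothing of substance.
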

\begin{proof}
Consider the auxiliary functions 
$$
\bar{F}(z)=e^{-F'(0)z}F(z), \quad G_s(z)=\bar{F}(sz).
$$
Then $\bar{F}'(0)=0$ and $G_s'(z)=0$, and by the previous theorem, $(\bar{F}^{-1}G_s)(A)$ is a contraction for each $A$. 
\end{proof}

\begin{prop}\label{divi}
Let $X,Y\in M_h$. Let $A=L_X+R_Y$ and $s\in [0,1]$. Then each of the maps listed below is a contraction of ${\cal B}({\cal I})$:
\begin{enumerate}
\item $\displaystyle\frac{\sinh(sA)}{s\sinh(A)}$. The case $s=0$ should be understood as  $\displaystyle\frac{A}{\sinh(A)}$.
\smallskip
\item $\displaystyle\frac{\cosh(sA)}{\cosh(A)}$.
\smallskip
\item $\displaystyle\frac{\sinh(sA)}{sA\cosh(rA)}$ if $0\le s\le r$. In particular $\displaystyle\frac{\tanh(A)}{A}$ is a contraction.
\smallskip
\item $\displaystyle\frac{sA\cosh(rA)}{\sinh(sA)}$ if $0\le 2r\le s$.
\end{enumerate}
\end{prop}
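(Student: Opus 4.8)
The plan is to express each of the four maps as a quotient $F(A)G(A)^{-1}$ with $A=L_X+R_Y$, choosing entire functions $F,G$ of order $\le 1$ with purely imaginary roots, normalized so that $F(0)=G(0)=1$ and (in every case) $F'(0)=G'(0)=0$, and then to conclude by Theorem \ref{comparo}, or, for items 1 and 2, directly by Corollary \ref{comparo2}. Two ingredients will be used throughout. First, by the Lemma preceding Theorem \ref{ineq}, $A$ is Hermitian in ${\cal B}({\cal I})$ with $\sigma(A)\subset\mathbb R$, so whenever $H$ is entire with $|H(0)|=1$ and purely imaginary Weierstrass exponent --- in particular when $H(0)=1$ and $H'(0)=0$ --- the operator $H(A)$ is expansive, hence invertible with contractive inverse, by Theorem \ref{ineq}(3). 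Second, everything in sight is a function of the single operator $A$, so all these operators commute and a product of contractions among them is again a contraction. I will read symbols such as $\sinh(cA)/(cA)$ or $\tanh(cA)/(cA)$ as the entire (resp.\ meromorphic) functional calculus of $A$, legitimate even though $A$ need not be invertible; this also settles the degenerate values $s=0$ (and $r=0$ in item 4), where $F$ collapses to the constant $1$ and the map becomes $G(A)^{-1}$ with $G\in\{\sinh(cz)/(cz),\cosh(cz)\}$, a contraction by the first ingredient.

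For items 1 and 2 I would apply Corollary \ref{comparo2} to $F(z)=\sinh(z)/z$ and to $F(z)=\cosh(z)$ respectively: both are entire of order $1$ with purely imaginary roots, $F(0)=1$, $F'(0)=0$, and $F(sA)F(A)^{-1}$ equals $\frac{\sinh(sA)}{s\sinh A}$, resp.\ $\frac{\cosh(sA)}{\cosh A}$; the right-hand side of the Corollary is $\|e^{F'(0)(s-1)A}T\|_{\cal I}=\|T\|_{\cal I}$ because $F'(0)=0$, so these are contractions for $s\in(0,1)$, while $s=1$ gives the identity and $s=0$ was disposed of above. For item 3 with $0<s\le r$ I would take $F(z)=\sinh(sz)/(sz)$ and $G(z)=\cosh(rz)$, so $F(A)G(A)^{-1}=\frac{\sinh(sA)}{sA\cosh(rA)}$; the positive roots of $F$ and $G$ are $z_k=k\pi/s$ and $w_k=(2k-1)\pi/(2r)$ ($k\ge 1$; the negative ones symmetric), and $s\le r$ forces $2rk\ge 2sk>s(2k-1)$, i.e.\ $z_k/w_k>1$, so Theorem \ref{comparo} applies; the case $s=r=1$ gives that $\tanh(A)/A$ is a contraction. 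For item 4 with $0<2r\le s$ I would take $F(z)=\cosh(rz)$ and $G(z)=\sinh(sz)/(sz)$, so $F(A)G(A)^{-1}=\frac{sA\cosh(rA)}{\sinh(sA)}$; now $z_j=(2j-1)\pi/(2r)$ and $w_j=j\pi/s$, and $2r\le s$ forces $s(2j-1)\ge 2r(2j-1)\ge 2rj$, i.e.\ $z_j/w_j\ge 1$, with equality only at $j=1$ when $s=2r$ --- in which case $z_1=w_1$, so that factor is the identity --- whence Theorem \ref{comparo} applies once more.

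The hard part is not the hyperbolic identities but getting the hypothesis of Theorem \ref{comparo} right: items 3 and 4 have to be set up with the \emph{entire} functions $\sinh(cz)/(cz)$ and $\cosh(cz)$, not with $\tanh$ or $1/\cosh$ (which are only meromorphic), and in each case one must check that the positive (and negative) roots of $F$ match, via an order-preserving bijection, those of $G$ with all quotients $\ge 1$, strict apart from the lone factor isolated in item 4. The convergence of the Weierstrass products used inside the proof of Theorem \ref{comparo} is then automatic, since $1/z_k-1/w_k=O(1/k)$ in every case and hence $\sum_k(1/z_k-1/w_k)^2<\infty$. Finally I would keep track of the degenerate parameter values and of the convention, already flagged in the statement for item 1, that these ``denominators'' are interpreted through the functional calculus of the possibly non-invertible operator $A$.
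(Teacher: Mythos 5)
Your proposal is correct and follows essentially the same route as the paper, which simply invokes Theorem \ref{comparo} and Corollary \ref{comparo2} applied to $\sinh(z)/z$ and $\cosh(z)$ (both having vanishing derivative at $0$). You have merely made explicit the root computations, the pairing $z_k/w_k>1$, and the boundary cases ($s=0$ and $s=2r$ with $j=1$) that the paper leaves to the reader.
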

\begin{proof}
Immediate from the two previous results, since both $\sinh(z)/z$ and $\cosh(z)$ have zero derivative at $z=0$.
\end{proof}

The maps of the above proposition have been studied in several papers by many different authors, let just mention a few references such as \cite{bhatiakosaki, bhapar, kosakih, kosaki}.

\begin{rem}\label{maci}
Let $t\in [0,1]$, $A,B\in M$ positive, $T\in {\cal I}$. Then
$$
\|A^{1-t}TB^{t}+A^{t}TB^{1-t}\|_{\cal I}  \le  \|AT+TB\|_{\cal I}
$$
and
$$
\|A^{1-t}TB^{t}-A^{t}TB^{1-t}\|_{\cal I} \le  (2t-1) \|AT-TB\|_{\cal I}.
$$
These inequalities for $M={\cal I}=\bh$ are due to Heinz \cite{heinz}, later Bhatia and Davies \cite{bhatiadaviesuno} generalized them to unitarily invariant norms in $\bh$. They follow easily from the previous proposition if we let $A=e^X$, $B=e^Y$ and $D=L_X-R_Y$. Then
\begin{eqnarray}\label{cosh}
A^{1-t}TB^{t}+A^{t}TB^{1-t}=\cosh((2t-1)D)(T)\\
\nonumber\\
A^{1-t}TB^{t}-A^{t}TB^{1-t}=\sinh((2t-1)D)(T).\nonumber
\end{eqnarray}
\end{rem}

\subsection{The exponential metric increasing property and operator means}

Assume that $t\in [\frac14,\frac34]$, let $A,B\in\bh$ be positive. If $|||\cdot|||$ is any unitarily invariant norm in $\bh$, then it is possible to compare the following means:
\begin{eqnarray}\label{refinada}
|||A^{\frac12}TB^{\frac12}||| &\le &\frac12 |||A^{t}TB^{1-t}+A^{1-t}TB^t|||\\
&\le & |||\int_0^1 A^{1-s}TB^{s}ds||| \le \frac12 |||AT+TB|||.\nonumber
\end{eqnarray}
This chain of inequalities was proved (in several steps in \cite{bhatiadavies,bhatiakosaki,kosakih} and with different techniques) by Bhatia, Davis, Hiai and Kosaki. In fact, in \cite[Th. 1]{D}, the optimality of the interval $[\frac14,\frac34]$ for the second term was shown by Drissi. In this section we indicate a proof by specialization, and comment some related inequalities.

\begin{rem}\label{expo}
This inequality relating several means is related to the EMI property (\textit{exponential metric increasing property}) since, if we put $T=A^{-1/2}YB^{-1/2}$ and $A=B=e^X$, we get
$$
|||e^{-X/2}\, (\int_0^1 e^{(1-t)X}Ye^{tX}dt)\, e^{-X/2}|||=|||\int_0^1 e^{(1/2-t)X}Ye^{(t-1/2)X}dt|||\ge |||Y|||,
$$
and since $\int_0^1 e^{(1-t)X}Te^{tX}dt=d\exp_X(Y)$, then we obtain
$$
|||e^{-X/2}d \exp_X(Y)e^{-X/2}|||\ge |||Y|||.
$$
Here $\exp:M\to M$ is the usual exponential map $\exp(X)=e^X$ and $d \exp_X$ denotes the differential of $\exp$ at $X$. The identity
\begin{equation}\label{difexpo}
\int_0^1 e^{(1-t)X}Y e^{tX}dt=d\exp_X(Y)
\end{equation}
is elementary. Indeed, put $f(t)=e^{(1-t)X}$ and $g(t)=e^{t(X+Y)}$. Then integrating the product by parts in $[0,1]$ yields
$$
e^{X+Y}-e^X=\int_0^1 e^{(1-t)X}Ye^{t(Y+X)}\,dt.
$$
Replacing $Y$ by $sY$ and letting $s\to 0$ gives (\ref{difexpo}). This last computations are formal and can be carried out in any normed ideal ${\cal I}$ with a unitarily invariant norm.
\end{rem}

For $X,Y\in M$ and $T\in{\cal I}$ we have
\begin{eqnarray}
\int_0^1 e^{(t-\frac12)X}Te^{(\frac12-t)Y} \,dt &=&\sum_{n\ge 0} \frac{1}{n!}\int_0^1 (t-\frac12)^n\,dt\,(L_X-R_Y)^n(T)\nonumber\\
&=&\sum_{k\ge 0} \frac{1}{2^{2k}(2k+1)!}  (L_X-R_Y)^{2k}(T).\nonumber
\end{eqnarray}
Hence
\begin{equation}\label{integral}
\int_0^1 e^{(t-\frac12)X}Te^{(\frac12-t)Y} \,dt=\frac{\sinh (z/2)}{z/2}\left|_{L_X-R_Y} (T)\right.= F(L_X-R_Y)(T), 
\end{equation}
where $F(z)=\frac{\sinh (z/2)}{z/2}$. In particular,
\begin{equation}\label{difexpon}
e^{-\frac {X}{2}}d\exp_X (Y) e^{-\frac {X}{2}} =F(L_X-R_X)(Y) =F(ad(X))(Y).
\end{equation}

We obtain a generalization of (\ref{refinada}) using the above remarks and Proposition \ref{divi}. For instance from
\begin{eqnarray}
\|S\|_{\cal I} &\le &\|\cosh((t-1/2)(L_X-R_Y))S\|_{\cal I}\nonumber\\
\nonumber\\
&\le &\left\|\frac{\sinh(s(L_X-R_Y))}{s(L_X-R_Y)}S\right\|_{\cal I}\le \|\cosh(r(L_X-R_Y))S\|_{\cal I},\nonumber
\end{eqnarray}
valid for $t\in [\frac12,1]$ and $r\ge s\ge 2t-1\ge 0$ by Proposition \ref{divi}, we obtain (\ref{refinada}) if we put $r=s=1/2$ and let $t\in [\frac12,\frac34]$, put $A=e^X$, $B=e^Y$ and $S=A^{\frac12}TB^{\frac12}$ and use equations (\ref{integral}) and (\ref{cosh}). The usual trick with $A_{\varepsilon}=A+\varepsilon1$ and $B_{\varepsilon}=B+\varepsilon1$ gives the inequality for positive (not necessarilly invertible) operators, and then by symmetry it extends for $t\in [\frac14,\frac34]$.

\medskip

Since it will be useful later, let us state as a corollary the exponential metric increasing property (or EMI for short).

\begin{cor}\label{expande}
Let $X,Y\in M_h\cap {\cal I}$. Then
$$
\|e^{-\frac{X}{2}}d \exp_X(Y)e^{-\frac{X}{2}}\|_{\cal I}\ge \|Y\|_{\cal I}.
$$
\end{cor}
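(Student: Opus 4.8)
Proof proposal. The plan is to recognize that the quantity on the left is exactly a value of the entire function $F(z)=\sinh(z/2)/(z/2)$ applied to the operator $ad(X)=L_X-R_X$, and then to show that this operator is expansive on $({\cal I},\|\cdot\|_{\cal I})$. Concretely, by the identity (\ref{difexpon}) (which in turn comes from (\ref{difexpo}) and (\ref{integral}), valid in any normed ideal with unitarily invariant norm as noted at the end of Remark \ref{expo}), one has
$$
e^{-\frac X2}d\exp_X(Y)e^{-\frac X2}=F(L_X-R_X)(Y),\qquad F(z)=\frac{\sinh(z/2)}{z/2},
$$
and $d\exp_X(Y)\in{\cal I}$ whenever $Y\in{\cal I}$ since it is the $\|\cdot\|_{\cal I}$-convergent integral $\int_0^1 e^{(1-t)X}Ye^{tX}\,dt$. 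So it suffices to prove $\|F(L_X-R_X)(Y)\|_{\cal I}\ge\|Y\|_{\cal I}$ for every $Y\in{\cal I}$, i.e.\ that $F(L_X-R_X)$ is expansive.

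For this I would invoke Proposition \ref{divi}(1) in its $s=0$ form, which says that $B\mapsto B/\sinh(B)$ evaluated at any operator of the form $L_P+R_Q$ with $P,Q\in M_h$ is a contraction of ${\cal B}({\cal I})$. Applying this with $P=X/2$, $Q=-X/2$ gives that $g(L_X-R_X)$ is a contraction, where $g(z)=(z/2)/\sinh(z/2)$; note $g$ is analytic on a neighbourhood of $\mathbb R$ (its only singularities are the simple poles at $z=2\pi ik$, $k\neq0$), and since $\sigma_{{\cal B}({\cal I})}(L_X-R_X)\subset\sigma(X)-\sigma(X)\subset\mathbb R$ by Remark \ref{espectro}, the holomorphic functional calculus makes $g(L_X-R_X)$ well defined and equal to the inverse of $F(L_X-R_X)$ (as $gF\equiv1$ near $\sigma(L_X-R_X)$). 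Then for any $Y\in{\cal I}$,
$$
\|Y\|_{\cal I}=\|g(L_X-R_X)\,F(L_X-R_X)(Y)\|_{\cal I}\le\|g(L_X-R_X)\|_{{\cal B}({\cal I})}\,\|F(L_X-R_X)(Y)\|_{\cal I}\le\|F(L_X-R_X)(Y)\|_{\cal I},
$$
which combined with the displayed identity above yields the claim.

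Alternatively, and perhaps more cleanly, one can apply Theorem \ref{ineq}(3) directly to $F(z)=\sinh(z/2)/(z/2)$ with the pair $(X,-X)\in M_h\times M_h$ and $T=Y\in{\cal I}$: this $F$ has order $\rho=1$, purely imaginary nonzero roots $\{2\pi ik:k\neq0\}$, $F(0)=1$, and $\alpha=\frac{d}{dz}\ln F(z)\big|_{z=0}=F'(0)=0$ (as $F$ is even), which is purely imaginary; hence $\|F(L_X+R_{-X})(Y)\|_{\cal I}\ge\|Y\|_{\cal I}$, and $L_X+R_{-X}=ad(X)$. The only point requiring a little care in either route is the bookkeeping that shows the expression is well defined in ${\cal I}$ and that the relevant spectrum is real so that the functional calculus and the root analysis go through; both are already supplied by Remark \ref{espectro} and Remark \ref{expo}, so I expect no substantive obstacle.
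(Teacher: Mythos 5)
Your proof is correct and takes essentially the same route as the paper: the identity $e^{-\frac{X}{2}}d\exp_X(Y)e^{-\frac{X}{2}}=F(ad(X))(Y)$ with $F(z)=\sinh(z/2)/(z/2)$ from Remark \ref{expo} and equation (\ref{difexpon}), combined with the expansivity of $F(ad(X))$, which the paper extracts from the chain of inequalities built on Proposition \ref{divi}(1) and which you obtain equivalently either from that proposition or by applying Theorem \ref{ineq}(3) directly. No gaps.
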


\medskip

\begin{rem}\label{seno}If $iX=H,iY=K\in iM_h$ are skew-adjoint and $T\in {\cal I}$, then
\begin{eqnarray}
\|\sin(L_H-R_K)T\|_{\cal I} & = & \left\|\sinh(L_{iX}-R_{iY})T\right\|_{\cal I}\nonumber \\
\nonumber\\
 &= &\left\|\frac{\sinh(L_{iX}-R_{iY})}{L_{iX}-R_{iY}}  (HT-TK)\right\|_{\cal I}\nonumber \\
 &= &\|\int_0^1 e^{(1-2t)iX} (HT-TK) e^{(2t-1) iY}dt\|_{\cal I}\nonumber\\
 &\le& \|HT-TK\|_{\cal I}\nonumber
\end{eqnarray}
by equation (\ref{integral}), since $e^{iX}$ and $e^{iY}$ are unitary operators. Hence
$$
\|\sin(H)T\cos(K)-\cos(H)T\sin(K)\|_{\cal I}\le \|HT-TK\|_{\cal I}
$$
and in particular we obtain an inequality that can be found in \cite[Th. 4]{kosaki}.
$$
\|\sin(H)T\|_{\cal I}\le \|HT\|_{\cal I}.
$$
See also the related Remark \ref{analit}.
\end{rem}

\begin{exmp}
Let $X,Y\in M_h$ and $T\in {\cal I}$. Let $\lambda_n$ be the root of $\tan(x)=x$ in $(n\pi, (n+\frac12)\pi)$. Then it is not hard to see \cite[p. 233]{bere} that
$$
z\cosh(z)-\sinh(z)=\frac{z^3}{3}\prod_{n\ge 1} \left(1+\frac{z^2}{\lambda_n}  \right).
$$
Dividing by $z$ and using eq. (\ref{integral}) we obtain, for $S=e^X$ and $R=e^Y$
$$
\|STR^{-1}+RTS^{-1}-\int_0^1S^{2t-1} TR^{1-2t}\,dt\|_{\cal I} \ge \frac23 \|X^2T+TY^2-2XTY\|_{\cal I}.
$$
\end{exmp}

\subsection{Lipschitz maps of operators}

\begin{thm}\label{uncoso}
Let $T\in {\cal I}$. Let $R,S\in M$ be invertible, $|S|=e^X,|R|=e^Y$. Then
$$
\|STR^{-1}-(S^*)^{-1}TR^*\|_{\cal I}\le \|\tanh(L_X-R_Y)\|_{{\cal B}({\cal I})} \|STR^{-1}+(S^*)^{-1}TR^*\|_{\cal I}.
$$
\end{thm}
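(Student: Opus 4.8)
The plan is to reduce the statement, via the polar decompositions of $S$ and $R$ and the unitary invariance of $\|\cdot\|_{\cal I}$, to the single operator identity $\sinh(A)=\tanh(A)\cosh(A)$ on ${\cal I}$, where $A=L_X-R_Y$.

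First I would write $S=U|S|=Ue^X$ and $R=V|R|=Ve^Y$ with $U,V$ unitary and $X=\log|S|$, $Y=\log|R|$ in $M_h$, exactly as in Remark \ref{analit}. Then $S^*=e^XU^*$, so $(S^*)^{-1}=Ue^{-X}$, and similarly $R^*=e^YV^*$; hence
$$
STR^{-1}\pm (S^*)^{-1}TR^*=U\bigl(e^XTe^{-Y}\pm e^{-X}Te^Y\bigr)V^*.
$$
Since $L_X$ and $R_{-Y}$ commute and are bounded on ${\cal I}$ (Remark \ref{espectro}), we have $e^{\pm A}(T)=e^{\pm X}Te^{\mp Y}$, so the two right-hand sides above equal $2U\cosh(A)(T)V^*$ and $2U\sinh(A)(T)V^*$ respectively. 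By unitary invariance of $\|\cdot\|_{\cal I}$, the asserted inequality is then equivalent to
$$
\|\sinh(A)(T)\|_{\cal I}\le \|\tanh(A)\|_{{\cal B}({\cal I})}\,\|\cosh(A)(T)\|_{\cal I}.
$$

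Next I would check that $\tanh(A)$ is a well-defined bounded operator and that $\sinh(A)=\tanh(A)\cosh(A)$ in ${\cal B}({\cal I})$. Because $X$ and $-Y$ lie in $M_h$, the Lemma preceding Theorem \ref{ineq} gives $\sigma_{{\cal B}({\cal I})}(A)\subset\mathbb R$; since the poles $i\pi(k+\frac12)$, $k\in\mathbb Z$, of $\tanh$ all have nonzero imaginary part, $\tanh$ is holomorphic on a neighbourhood of $\sigma(A)$, so $\tanh(A)$ is defined by the holomorphic functional calculus and is bounded. On that same neighbourhood $\cosh$ does not vanish and $\tanh\cdot\cosh=\sinh$, so multiplicativity of the functional calculus yields $\sinh(A)=\tanh(A)\cosh(A)$ as operators on ${\cal I}$.

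Finally, applying this operator identity to $T$ gives $\sinh(A)(T)=\tanh(A)\bigl(\cosh(A)(T)\bigr)$, and taking $\|\cdot\|_{\cal I}$-norms together with the operator-norm bound for $\tanh(A)$ completes the argument. The one step that needs care is the well-definedness of $\tanh(A)$, i.e. recording that $\sigma(A)$ is real and hence misses the poles of $\tanh$; everything else is routine bookkeeping with the polar decomposition and unitary invariance. Note that, unlike in Proposition \ref{agmi}, no approximation argument is needed here, since $S$ and $R$ are assumed invertible.
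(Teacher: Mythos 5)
Your proposal is correct and follows essentially the same route as the paper: reduce to positive $S=e^X$, $R=e^Y$ via the polar decomposition and unitary invariance, then exploit the factorization $\sinh(A)=\tanh(A)\cosh(A)$ with $A=L_X-R_Y$. The paper phrases the last step as $\sinh(A)=\sinh(A)D^{-1}D$ with $D=\cosh(A)$ invertible by Theorem \ref{ineq}, whereas you justify $\tanh(A)$ directly by the holomorphic functional calculus on a neighbourhood of $\sigma(A)\subset\mathbb R$; these are equivalent, and your version simply makes the well-definedness of $\tanh(A)$ explicit.
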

\begin{proof}
We may assume that $R$ and $S$ are positive and invertible, $S=e^X,R=e^{Y}$. Since $\cosh(0)=1$, then $D=\cosh(L_X-R_Y)$ is invertible in ${\cal B}({\cal I})$ and then
\begin{eqnarray}
\|\sinh(L_X-R_Y)T\|_{\cal I}  & = &\|\sinh(L_X-R_Y)D^{-1} D(T)\|_{\cal I} \nonumber\\
\nonumber\\
& \le & \|\tanh(L_X-R_Y)\|_{  {\cal B}({\cal I})  } \|\cosh(L_X-R_Y)(T)\|_{\cal I}.\nonumber
\end{eqnarray}
\end{proof}

\begin{cor}\label{otrocoso}
Let $T\in {\cal I}$. Let $R,S\in M$ be invertible, $|S|=e^X, |R|=e^Y$. Then
$$
\|STR^{-1}-(S^*)^{-1}TR^*\|_{\cal I}\le \|L_X-R_Y\|_{{\cal B}({\cal I})}\|STR^{-1}+(S^*)^{-1}TR^*\|_{\cal I}.
$$
\end{cor}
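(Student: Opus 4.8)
The plan is to deduce this directly from Theorem \ref{uncoso}. Write $A = L_X - R_Y = L_X + R_{-Y}$. Since substituting any upper bound for $\|\tanh(A)\|_{{\cal B}({\cal I})}$ into the inequality of Theorem \ref{uncoso} weakens that inequality accordingly, it suffices to prove
$$
\|\tanh(A)\|_{{\cal B}({\cal I})} \le \|A\|_{{\cal B}({\cal I})} = \|L_X - R_Y\|_{{\cal B}({\cal I})},
$$
and the corollary follows at once.

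First I would check that the operators involved are well defined. Since $-Y \in M_h$, Remark \ref{espectro} gives $\sigma_{{\cal B}({\cal I})}(A) \subset \sigma(X) - \sigma(Y) \subset \mathbb R$, so $A$ has real spectrum. The function $\cosh$, and the function $g$ obtained from $z \mapsto \tanh(z)/z$ by filling in the removable singularity at the origin, are holomorphic on a neighborhood of $\mathbb R$, hence on a neighborhood of $\sigma_{{\cal B}({\cal I})}(A)$; therefore $\cosh(A)$ is invertible, both $\tanh(A) = \sinh(A)\cosh(A)^{-1}$ and $g(A)$ lie in ${\cal B}({\cal I})$, and by multiplicativity of the analytic functional calculus they commute and satisfy $\tanh(A) = A\, g(A) = g(A)\, A$.

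Next I would invoke Proposition \ref{divi}, part (3), taking $s = r = 1$ and replacing $Y$ by $-Y$: the operator $g(A) = \tanh(A)/A$ is a contraction of ${\cal B}({\cal I})$, that is, $\|g(A)\|_{{\cal B}({\cal I})} \le 1$. Consequently, for every $T \in {\cal I}$,
$$
\|\tanh(A)\,T\|_{\cal I} = \|g(A)\,A\,T\|_{\cal I} \le \|g(A)\|_{{\cal B}({\cal I})}\,\|A\,T\|_{\cal I} \le \|A\|_{{\cal B}({\cal I})}\,\|T\|_{\cal I},
$$
which yields the required bound $\|\tanh(A)\|_{{\cal B}({\cal I})} \le \|A\|_{{\cal B}({\cal I})}$.

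Every ingredient is either quoted from an earlier result or is a one-line functional-calculus manipulation, so there is no genuine obstacle here; the only point deserving a moment's care is the spectral inclusion $\sigma_{{\cal B}({\cal I})}(A) \subset \mathbb R$ from Remark \ref{espectro}, which is precisely what guarantees that $\sigma(A)$ avoids the poles $\{i\pi/2 + ik\pi : k \in \mathbb Z\}$ of $\tanh$, so that $\tanh(A)$ and the factorization $\tanh(A) = A\,g(A)$ are legitimate.
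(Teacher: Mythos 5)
Your proof is correct and follows the route the paper clearly intends (the corollary is stated without proof): combine Theorem \ref{uncoso} with the bound $\|\tanh(A)\|_{{\cal B}({\cal I})}\le\|A\|_{{\cal B}({\cal I})}$, which you correctly extract from Proposition \ref{divi}(3) via the factorization $\tanh(A)=g(A)A$ with $g(A)=\tanh(A)/A$ a contraction. Your extra care about the spectral inclusion $\sigma_{{\cal B}({\cal I})}(A)\subset\mathbb R$ and the invertibility of $\cosh(A)$ is sound and consistent with Remark \ref{espectro} and the proof of Theorem \ref{uncoso}.
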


\medskip

\begin{rem}
In \cite{seddik2} the author proves
\begin{equation}\label{sed}
|||STS^{-1}-S^{-1}TS|||\le (\|S\|\,\|S^{-1}\|-1)|||STS^{-1}+S^{-1}TS|||
\end{equation}
for $S\in \bh$ invertible self-adjoint, and $T$ in a normed ideal in $\bh$ with unitarily invariant norm $|||\cdot |||$. Recall $ad(X)=L_X-R_X$ is called the \textit{adjoint representation}. We claim that for $X\in M_h$
\begin{eqnarray}
\|ad(X)\|_{{\cal B}({\cal I})} & \le & \|ad(X)\|_{{\cal B}({\cal B}(\cal H))}=\lambda_{max}(X)-\lambda_{min}(X)\nonumber\\
&<& e^{\lambda_{max}(X)-\lambda_{min}(X)}-1= \|e^X\|\,\|e^{-X}\|-1,\nonumber
\end{eqnarray}
viz. that the previous corollary improves eq. (\ref{sed}). Here $\lambda_{max}(X)$ and $\lambda_{min}(X)$ stand for the largest and smallest spectral values of $X$ in $M$. The substantial part is the first inequality, to prove it note that $ad(X)$ is an Hermitian element of ${\cal B}({\cal I})$, so its norm can be computed with the spectral radius formula \cite[28.3]{levin}, and 
$$
\sigma_{{\cal B}({\cal I})} (ad(X))\subset \sigma(X)-\sigma(X)
$$
by Remark \ref{espectro}.
\end{rem}

\begin{rem}
Since $\tanh$ maps $\mathbb R$ onto $(-1,1)$, if we put 
$$
\Theta(X,Y)(T)=\tanh(L_X-R_Y)(T),
$$
then
$$
\sigma_{{\cal B}({\cal I})}(\Theta(X,Y))\subset (-1,1).
$$
From Theorem \ref{uncoso}, we know that
$$
\| STR^{-1}-(S^*)^{-1}TR^*\|_{\cal I} \le \, \|\Theta(X,Y)\|_{{\cal B}({\cal I})}\, \| STR^{-1}+(S^*)^{-1}TR^*\|_{\cal I}
$$
for invertible $S,R\in M$ and $T\in {\cal I}$. If $M$ is a semi-finite factor with trace $\tau$, and ${\cal I}=L^p(M,\tau)$, then for $p=2$ the space ${\cal I}$ is a Hilbert space. By Remark \ref{eledos}, the norm of $\Theta(X,Y)$ can be computed via the spectral radius. Then 
$$
\| \Theta(X,Y)\|_{ {\cal B}(L^2(M,\tau)) }\le 1
$$
for any $X,Y\in M_h$ (i.e. $\Theta$ is uniformly bounded for any $X,Y\in M_h$). 
\end{rem}

If ${\cal I}$ is not a Hilbert space, the norm of this operator can be larger than its spectral radius. However, if we put put $A=S$, $R=B$, $T=A^*XB$ (as in Proposition \ref{agmi}) and then replace $|A|$ and $|B|$ by its square roots, we obtain 
$$\| |A|X-X|B|\|_{\cal I} \le \, \|\Theta(1/2\ln|A|,1/2\ln|B|)\|_{{\cal B}({\cal I})}\, \| |A|X+X|B|\|_{\cal I},
$$
and it was proved in \cite{davies} by Davies that for $p\in (1,+\infty)$
$$
\| |A|X-X|B|\|_{p} \le \, \gamma_p \| |A|X+X|B|\|_{p},
$$
for invertible $R,S\in \bh$ and $T\in \bp=L^p(\bh,\tau)$ (the ideal of compact $p$-Schatten operators). The constant $\gamma_p$ does not depend on the involved operators, but depends on $p$, and it is equal to one when $p=2$. The proof of Davies uses a nontrivial result of Macaev \cite{macaev} for linear transformators on compact operators which can be found in p.121 of Gohberg and Krein's book \cite{gk}. See \cite[2.1]{kita} for further discussion on the subject.

The continuity of the absolute value map in Banach spaces associated with general von Neumann algebras was proved later by Dodds-Dodds-Pagter and Sukochev \cite{dd}, where a full list of references on the subject can be found. In other direction, the inequality
$$
\|STS^{-1}-S^{-1}TS\|_{\cal I}\le \|ad X\|\|STS^{-1}+S^{-1}TS\|_{\cal I}.
$$
has been interpolated  by Conde \cite{condeineq} to certain classes of spaces related to the Finsler manifold of positive operators, for symmetrically normed ideals of $\bh$.

\subsection{The L\"owner-Heinz inequality}

The inequality
$$
\|A^tB^t\|\le \|AB\|^{t},\quad \;t\in [0,1],
$$
valid for positive invertible operators $A,B\in\bh$, is occasionally called the C\"ordes inequality in the literature, and it is also known as the L\"owner-Heinz inequality since it is equivalent to the fact that $t$-power ($t\in [0,1]$) is operator monotone \cite{furuta}. For the $p$-norms of $\bh$ ($p>0$) it is stated as
$$
Tr((B^{\frac12} A B^{\frac12})^{rp})\le Tr((B^{\frac{r}{2}} A^r B^{\frac{r}{2}})^p),\quad r\ge 1,
$$
an inequality due to Araki \cite{araki} (here $Tr$ denotes the usual infinite trace of $\bh$). As it is, it was generalized to the noncommutative $L^p(M,\tau)$-spaces of a semi-finite von Neumann algebra $M$ by Kosaki in \cite{kosa2}.

In the uniform norm of $\bh$ it has an equivalent expression 
$$
\|\ln(A^{-\frac{t}{2}}B^{t}A^{-\frac{t}{2}})\|\le t\|\ln(A^{-\frac{1}{2}}B A^{-\frac{1}{2}})\|,
$$
and stated in this form, it establishes the convexity of the geodesic distance in the Finsler manifold of positive invertible operators \cite{corachproc}, when they are regarded as an homogeneous space of the full group of invertible operators by the action 
$$
A\mapsto GAG^*, \quad A\in \bh^+,\;G\in \bh^{\times}.
$$
Here $\bh^{\times}$ denotes the group of invertible elements in $\bh$. In this section we prove the   Araki-C\"ordes inequality for unitarily invariant norms in a $C^*$-algebra $M$.

\begin{rem}
When ${\cal I}_0={\cal I}\cap M$ is an ideal with an unitarily invariant norm in a $C^*$-algebra $M$, there is a natural bijection 
$$
{\cal I}\cap M_h \longleftrightarrow {\cal I}_0^+=\{ 1 + T>0,\;T\in {\cal I}_0\}.
$$
between bounded self-adjoint elements of ${\cal I}_0$ and the (unitized) positive invertible elements of ${\cal I}_0$, given by the usual exponential map of $M$. In fact, if $T\in M_h\cap {\cal I}$, then clearly
$$
e^T=1+T+\frac12 T^2+\cdots=1+T(1+\frac12 T+\cdots)\in {\cal I}_0^+.
$$
On the other hand, if $1+T\in {\cal I}_0^+\subset M$, it has (being positive and invertible) a unique real analytic logarithm $X\in M_h$ (given for instance by the Cauchy functional calculus). Then if we consider $F(z)=z^{-1}(e^z-1)$, since $e^X=1+T$ we have
$$
1+T=e^X=1+XF(X),
$$
hence $T=XF(X)$. But for self-adjoint $X\in M_h$, the element $F(X)$ is invertible in $M$ hence $X=F(X)^{-1}T$ which proves that $X\in {\cal I}\cap M_h$.
\end{rem}

\begin{thm}
Let $X,Y\in {\cal I}\cap M_h$ with  $({\cal I},\|\cdot\|_{\cal I})$ a complex normed ideal in $M$ with an unitarily invariant norm, and let $t\in [0,1]$. Then
$$
\|\ln(e^{-\frac{t}{2}X}e^{tY}e^{-\frac{t}{2}X})\|_{\cal I}\le \; t\, \|\ln(e^{-\frac{X}{2}}e^{Y}e^{-\frac{X}{2}})\|_{\cal I}.
$$
\end{thm}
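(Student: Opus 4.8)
The plan is to recast the inequality as a statement about the Finsler metric of the cone of positive invertible operators. For positive invertible $P,Q$ in the unitization of ${\cal I}$ set $d(P,Q)=\|\ln(P^{-1/2}QP^{-1/2})\|_{\cal I}$; since $(e^{tX})^{-1/2}=e^{-tX/2}$, the asserted inequality is exactly $d(e^{tX},e^{tY})\le t\,d(e^X,e^Y)$ for $t\in[0,1]$. Now $t\mapsto e^{tX}$ and $t\mapsto e^{tY}$ are the geodesics of the cone issuing from $1$ with initial velocities $X$ and $Y$, and both equal $1$ at $t=0$; hence it is enough to prove that $t\mapsto d(e^{tX},e^{tY})$ is convex on $[0,1]$, for then evaluating convexity at the endpoints $t=0$ (where the value is $d(1,1)=0$) and $t=1$ yields the claim.

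To obtain the convexity — in fact the convexity of $u\mapsto d(\alpha(u),\beta(u))$ for any two geodesics $\alpha,\beta$, i.e. the (Finsler, Busemann) nonpositive curvature of the cone — I would run the Corach--Porta--Recht argument starting from the EMI property (Corollary \ref{expande}) in the form ``$\log$ is infinitesimally a contraction'': $\|d\log_P(W)\|_{\cal I}\le\|P^{-1/2}WP^{-1/2}\|_{\cal I}$. Together with the isometric action $P\mapsto GPG^*$ of the invertible group (so that $d(GPG^*,GQG^*)=d(P,Q)$) and the explicit geodesic $u\mapsto P^{1/2}(P^{-1/2}QP^{-1/2})^{u}P^{1/2}$ joining $P$ to $Q$, one pulls back by $\log$ the geodesic joining a point $p$ to a point of a second geodesic $\gamma$, estimates the ``flat'' length of the resulting curve from below by $\|\ln p-\ln\gamma(u)\|_{\cal I}$ and from above by $d(p,\gamma(u))$ using infinitesimal $\log$-contractivity, and the usual bookkeeping gives convexity of $u\mapsto d(p,\gamma(u))$ and then of $u\mapsto d(\alpha(u),\beta(u))$; see \cite{cpr1,corachproc}. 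The point is that this argument uses no eigenvalue list, so it is valid for every unitarily invariant norm once Section \ref{seccpr} has provided EMI in that generality. The invariance $d(GPG^*,GQG^*)=d(P,Q)$ needs no new idea either: $(GPG^*)^{-1/2}(GQG^*)(GPG^*)^{-1/2}$ is similar to $P^{-1/2}QP^{-1/2}$, and if positive invertible $P',Q'$ satisfy $P'=SQ'S^{-1}$ then $\ln Q'$ commutes with $(S^*S)^{1/2}$, whence $\ln P'=U(\ln Q')U^*$ for the unitary part $U$ of $S$, so $\|\ln P'\|_{\cal I}=\|\ln Q'\|_{\cal I}$.

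One preliminary reduction is harmless: for $X,Y\in{\cal I}\cap M_h$, property (\ref{ideal}) and the Remark preceding the theorem guarantee $\ln(e^{-tX/2}e^{tY}e^{-tX/2})\in{\cal I}\cap M_h$, and likewise all the square roots and one-parameter groups above stay in $1+({\cal I}\cap M)$, so the whole discussion takes place inside ${\cal I}$. The main obstacle is the convexity step: for a non-Hilbertian unitarily invariant norm the cone is only a Finsler (not a Riemannian) manifold, so one must run the length-comparison argument with the metric notion of nonpositive curvature and with the weak differentiability available, checking that no step covertly uses Hilbert-space structure. Everything else — the geometric reformulation, the endpoint reduction, and the invariance of $d$ — is routine.
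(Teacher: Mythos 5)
There is a genuine gap at the step you yourself flag as ``the main obstacle'': the convexity of the geodesic distance is asserted, not proved, and the argument you sketch for it does not deliver it. Pulling the geodesic from $p$ to $\gamma(u)$ back by $\log$ and comparing lengths gives only the integrated EMI bound $\|\ln p-\ln\gamma(u)\|_{\cal I}\le d(p,\gamma(u))$, i.e.\ a \emph{lower} bound on $d$; convexity of $u\mapsto d(p,\gamma(u))$ is an \emph{upper} bound statement and cannot follow from this alone. Nor can you import the convexity from \cite{cpr1,corachproc} as a black box: in \cite{corachproc} the convexity of the geodesic distance is shown to be \emph{equivalent} to the L\"owner--Heinz inequality (for the operator norm), so invoking it here -- where the L\"owner--Heinz-type inequality for a general unitarily invariant ideal norm is precisely what is to be proved -- is circular, and in any case those references do not cover general normed ideals in a $C^*$-algebra.

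What is missing is a second analytic ingredient beyond EMI: a norm bound on the differential of the power map $P\mapsto P^t$ along the geodesic, which is where the factor $t$ actually comes from. The paper's proof supplies exactly this: writing $g(s)=e^{X/2}e^{sX_1}e^{X/2}$ and $\beta_t(s)=e^{-tX/2}g(s)^te^{-tX/2}$, it bounds $\|X_t\|_{\cal I}$ by $\int_0^1\|g^{-t/2}(g^t)^{\boldsymbol{\cdot}}g^{-t/2}\|_{\cal I}\,ds$ using EMI (Corollary \ref{expande}), and then uses Corollary \ref{comparo2} -- the contractivity of $F(tA)F(A)^{-1}$ for $F(z)=\sinh(z/2)/(z/2)$, a consequence of the second main inequality on interlaced roots -- to get $\|g^{-t/2}(g^t)^{\boldsymbol{\cdot}}g^{-t/2}\|_{\cal I}\le t\,\|g^{-1/2}\dot g\,g^{-1/2}\|_{\cal I}=t\,\|X_1\|_{\cal I}$. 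Your geometric reformulation, the endpoint reduction from convexity, and the invariance of $d$ under congruence are all fine, but without an analogue of Corollary \ref{comparo2} (or some other proof of the convexity valid for arbitrary unitarily invariant ideal norms) the proof does not close.
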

\begin{proof}
Let 
$
X_t=\ln(e^{-\frac{t}{2}X}e^{tY} e^{-\frac{t}{2}X}),\;$ then we have to show that $\| X_t \|_{\cal I} \le t\; \|X_1\|$. Consider 
$$
g(s)=e^{\frac{X}{2}}e^{sX_1} e^{\frac{X}{2}},
$$
which is positive invertible for any $s\in \mathbb R$, and consider the auxiliary function 
$$
\beta_t(s)=e^{-\frac{t}{2}X}g(s)^te^{-\frac{t}{2}X},
$$
which is also positive invertible for any $s,t\in\mathbb R$. In this proof the dot indicates the derivative with respect to the $s$ variable, and this variable is omitted in the computations. 

First note that, if $U_t=\beta_t^{-\frac{1}{2}}e^{-\frac{t}{2}X}g^{\frac{t}{2}}$, then $U_t$ is a unitary operator, hence
$$
\| g^{-\frac{t}{2}} (g^t)^{\cdot}g^{-\frac{t}{2}} \|_{\cal I}= \| U_t g^{-\frac{t}{2}} (g^t)^{\cdot}g^{-\frac{t}{2}} U_t^*\|_{\cal I}=\| \beta^{-\frac{1}{2}}_t\dot{\beta}_t \beta^{-\frac{1}{2}}_t \|_{\cal I}.
$$
Since $\beta_t > 0$, $\dot{\beta_t}= d\exp_{\ln\beta_t}(\frac{d}{ds}\ln\beta_t)$, hence 
$$
\|\beta_t^{-\frac{1}{2}}\dot{\beta_t} \,\beta_t^{-\frac{1}{2}} \|_{\cal I}= \| e^{-\frac{\ln\beta_t}{2}} \dot{\beta_t}\,e^{-\frac{\ln\beta_t}{2}}\|_{\cal I}\ge \|\frac{d}{ds}\ln\beta_t\|_{\cal I}
$$
by Corollary \ref{expande}. 

From the facts $\beta_t(0)=1$ and $X_t=\ln\beta_t(1)$ (for any $t\in\mathbb R$) we obtain
\begin{eqnarray}
\|X_t\|_{\cal I} &=& \| \ln\beta_t(1)-\ln\beta_t(0)\|_{\cal I}\le \int _0^1 \|\frac{d}{ds}\ln\beta_t\|_{\cal I} \;ds \nonumber\\
&\le &\int_0^1 \|\beta_t^{-\frac{1}{2}}\dot{\beta_t} \beta_t^{-\frac{1}{2}} \|_{\cal I} \;ds= \int_0^1 \|g^{-\frac{t}{2}} (g^t)^{\cdot}g^{-\frac{t}{2}}\|_{\cal I} \; ds \,\nonumber.
\end{eqnarray}
On the other hand 
\begin{eqnarray}
g^{-\frac{t}{2}} (g^t)^{\cdot}g^{-\frac{t}{2}} &= &t\; g^{-\frac t2} d\exp_{t\ln g}(d\ln _g(\dot{g})) g^{-\frac t2}\nonumber\\
\nonumber\\
&=& t\; F(t \, ad(\ln g)) F(ad(\ln g))^{-1} (g^{-\frac{1}{2}} \dot g g^{-\frac{1}{2}})\nonumber
\end{eqnarray}
by the chain rule and equation (\ref{difexpon}) in Remark \ref{expo}, where $F(z)=\frac{\sinh (z/2)}{z/2}$. Then, if $t\in (0,1)$, 
$$
\|g^{-\frac{t}{2}} (g^t)^{\cdot}g^{-\frac{t}{2}}\|_{\cal I} \le t \; \|g^{-\frac{1}{2}} \dot g g^{-\frac{1}{2}}\|_{\cal I}
$$
by Corollary \ref{comparo2}. Finally, note that $V_s=g(s)^{-\frac 12} e^{\frac{X}{2}} e^{\frac s2 X_1}$ is a unitary operator, hence
$$
\| g^{-\frac{1}{2}} \dot g g^{-\frac{1}{2}}\|_{\cal I} = \| V_s X_1 V_s^* \|_{\cal I} =\| X_1 \|_{\cal I}.
$$
\end{proof}

\begin{rem}
In the above proof, we in fact proved the technical inequality
$$
\|\ln(e^{-\frac{t}{2}X}e^{tY}e^{-\frac{t}{2}X})\|_{\cal I}\le \int_0^1\|g^{-\frac{t}{2}} (g^t)^{\cdot}g^{-\frac{t}{2}}\|_{\cal I}\,ds \le \; t\, \|\ln(e^{-\frac{X}{2}}e^{Y}e^{-\frac{X}{2}})\|_{\cal I},
$$
with $g(s)=e^{\frac{X}{2}}(e^{-\frac{X}{2}}e^{Y}e^{-\frac{X}{2}} )^s e^{\frac{X}{2}}$.
\end{rem}

\section{Further generalizations}\label{secfinal}

The hypothesis on the order of the entire function $F$ of Theorem \ref{in} can be relaxed in order to obtain inequalities for functions of any finite order. Let us indicate here two other possible generalizations of the main results of this paper.

\begin{enumerate}
\item If ${\cal A}$ is an Hermitian Banach algebra (in the sense that it has an isometric involution and each element of the form $a^*a$ has nonnegative spectrum, or equivalently that hermitian elements have real spectrum  \cite[Section 11.4]{palmer}), then Theorem \ref{ineq} and Theorem \ref{comparo} can be rewritten in that context with no significant modification.

\medskip

\item If we consider the ideals ${\cal I}^{\tau,p}$ of a semi-finite von Neumann algebra with a semi-finite trace $\tau$ (consisting of the bounded elements of the noncommutative $L^p$ spaces), then the role of the ideal in the main inequalities can be reversed in the following sense: instead of considering $X,Y\in M_h$ (bounded) and $T\in L^p(M,\tau)$, consider $X,Y\in L^p(M,\tau)_h$ and $T\in {\cal I}^{\tau,p}$. In order to make sense out of expressions such as $F(L_X+R_Y)T$ or $e^XTe^{-Y}=e^{L_X}e^{-R_Y}T$, we use analytic vectors \cite{nelsonlitic}. We will focus on left multiplication, the right multiplication operator can be treated in the same fashion. Note that $L_X$ is a densely defined operator in $L^p(M,\tau)$ (recall that $\|XT\|_p\le \|X\|_p \|T\|_{\infty}$). Assume that $L_X$ admits a dense set of analytic vectors. Then $F(L_X)$ is a (possibly unbounded) operator of $L^p(M,\tau)$, defined on a dense set $V_X\subset L^p(M,\tau)$. Moreover, since $\|e^{i L_X}T\|_p=\|T\|_p$ for any $T\in V_X\subset L^p(M,\tau)$,  by the theory of unbounded dissipative operators the operator $1\pm s i L_X$ is expansive (and possibly unbounded) in $L^p(M,\tau)$, with bounded inverse (for any  $s\in\mathbb R$). The rest of the proof of Theorem \ref{ineq} follows replacing ${\cal I}$ with $V_X$. The same remarks hold for Theorem \ref{comparo}. Note that for $p=2$, the operator $L_X$ is self-adjoint in ${\cal H}=L^2(M,\tau)$. Hence $L_X$ always admits a dense set of analytic vectors: for consider a resolution of the identity ${\cal E}$ of $L_X$, then the family
$$
V_X=\{{\cal E}(A)T:A\subset\mathbb R\mbox{  is a bounded Borel set, } T\in L^2(M,\tau)\} 
$$
is a dense set of analytic vectors for $L_X$ by the spectral theorem.
\end{enumerate}

\bigskip

\noindent
Gabriel Larotonda\\
Instituto de Ciencias \\
Universidad Nacional de Gral. Sarmiento \\
J. M. Gutierrez 1150 \\
(1613) Los Polvorines \\
Argentina  \\
e-mail: glaroton@ungs.edu.ar

\end{document}